\crefname{hypothesis}{Hypothesis}{Hypotheses}
\title{Stability equivalence among stochastic differential equations and stochastic differential equations with piecewise continuous arguments and corresponding Euler-Maruyama methods\thanks{Submitted to the editors DATE.
\funding{This work is supported by the National Natural Science Foundation of China (No. 11671113)}}}
\author{Minghui Song\thanks{Department of mathematics, Harbin Institute of Technology, Harbin 150001, China
  (\email{songmh@hit.edu.cn, ydgeng@hit.edu.cn, mzliu@hit.edu.cn}).}
\and Yidan Geng\footnotemark[2]
\and Mingzhu Liu\footnotemark[2]}
\begin{document}

\maketitle

\begin{abstract}
 In this paper, we consider the equivalence of the $p$th moment exponential stability for stochastic differential equations (SDEs), stochastic differential equations with piecewise continuous arguments (SDEPCAs) and the corresponding Euler-Maruyama methods EMSDEs and EMSDEPCAs. We show that if one of the SDEPCAs, SDEs, EMSDEs and EMSDEPCAs is $p$th moment exponentially stable, then any of them is $p$th moment exponentially stable for a sufficiently small step size $h$ and $\tau$ under the global Lipschitz assumption on the drift and diffusion coefficients
\end{abstract}

\begin{keywords}
  Exponential stability, Stochastic differential equations, Numerical solutions, Piecewise continuous arguments
\end{keywords}

\begin{AMS}
  60H10, 65C20, 65L20, 60H35
\end{AMS}

\section{Introduction}
Stochastic differential equations (SDEs) have been widely used in many branches of science and industry \cite{Arnold1974, Friedman1976, Mao2008, Lawrence2013, Braumann, Bernt2013}. There is an extensive literature in stochastic stability (e.g. the moment exponential stability or almost sure exponential stability) \cite{Arnold1974, Friedman1976, Khasminskii1980Stochastic, Mao1994stability, Schurz1997, Shaikhet, Buckwar2006}. One of the powerful techniques in the study of stochastic stability is the method of Lyapunov functions. In the absence of an appropriate Lyapunov function, we may carry out careful numerical simulations using a numerical method, say the Euler-Maruyama (EM) method \cite[see e.g.][]{Peter1992, Mao2003, Tudor1987, Milstein1995, Hutzenthaler2011,Hutzenthaler2013, Higham2002, Baker2000} with a small step size. Does the main question arise whether the numerical solutions can reproduce and predict the stability of the underlying solutions?

The case that stochastic stability of the general nonlinear equation and that of the numerical method are equivalent for a sufficiently small step size can be founded in 
\cite{Mao2003LMS, HighamNumerische2007, Mao2007SDDE, MaoSIAM2015, Deng2019, Linna2018}
, while for the linear equation in \cite{Baker2005, Higham2000, Mitsui1996}. Higham et al. in \cite{HighamSIAM2007} showed that when the SDE obeys a linear growth condition, the EM method recovers almost surely exponential stability.

In this paper, we consider the following stochastic differential equation with piecewise continuous argument (SDEPCA)
\begin{equation}\label{SDEPCA}
dx(t)=\left(f(x(t))+u_{1}\left(x\left(\left[t/\tau\right)\tau\right)\right)\right]dt+(g(x(t))+u_{2}(x([t/\tau]\tau)))dw(t)
\end{equation}
and the stochastic differential equation (SDE)
\begin{equation}\label{SDE}
dy(t)=(f(y(t))+u_{1}(y(t)))dt+(g(y(t))+u_{2}(y(t)))dw(t).
\end{equation}
We also consider the applications of EM method to SDEPCA \eqref{SDEPCA} and SDE \eqref{SDE}, respectively
\begin{equation}\label{SDEPCAs-Euler}
X_{n+1}=X_{n}+(f(X_{n})+u_{1}(X_{[n/m]m}))h+(g(X_{n})+u_{2}(X_{[n/m]m}))\Delta w_{n},
\end{equation}
\begin{equation}\label{SDEs-Euler}
Y_{n+1}=Y_{n}+(f(Y_{n})+u_{1}(Y_{n}))h+(g(Y_{n})+u_{2}(Y_{n}))\Delta w_{n},
\end{equation}
where $h=\frac{\tau}{m}, m\in\mathbb{N}^{+}$. We refer to \eqref{SDEPCAs-Euler} and \eqref{SDEs-Euler} by the terms EMSDEPCA \eqref{SDEPCAs-Euler} and EMSDE \eqref{SDEs-Euler}, respectively.  The main purpose of the present paper is to show that 
if one of the SDEPCAs \eqref{SDEPCA}, SDEs \eqref{SDE}, EMSDEPCA \eqref{SDEPCAs-Euler} and  EMSDE \eqref{SDEs-Euler} is $p$th moment exponential stable, then so are the others for a sufficiently small step size $h$ and $\tau$ under a global Lipschitz assumption on the drift and diffusion coefficients. 
 In order to do this, we shall concentrate on the following questions:
 \begin{enumerate}
\item[(Q1)] If for a sufficiently small $\tau$, the SDEPCA \eqref{SDEPCA} is $p$th moment exponentially stable, can we confidently infer that the SDE \eqref{SDE} is $p$th moment exponentially stable?
\item[(Q2)] For a sufficiently small step size $h$, does the EMSDE \eqref{SDEs-Euler} reproduce the $p$th moment exponential stability of the underlying SDE \eqref{SDE}?
\item[(Q3)] For a sufficiently small $\tau$, the EMSDEPCA \eqref{SDEPCAs-Euler} can preserve the $p$th moment exponential stability of EMSDE \eqref{SDEs-Euler}?
\item[(Q4)] If the EMSDEPCA \eqref{SDEPCAs-Euler} is $p$th moment exponentially stable, will the SDEPCA \eqref{SDEPCA} be the $p$th moment exponentially stable for a sufficiently small step size $h$?
\end{enumerate}
It is known that the positive answer to (Q2) for SDE in case $p=2$ can be founded in \cite{Mao2003LMS}. The stochastic differential equation with piecewise continuous arguments (SDEPCA) has been studied extensively \cite[see e.g.][]{MaoAutomatica2013, Maoletter2014, MaoIEEEtac2016, Dong2017, Liyuyuan2017, You2015}, and in the case of $\tau=1$, we refer to \cite{Lu2019, Lu2018}. Mao in \cite{MaoAutomatica2013} is the first paper that investigated the mean square exponentially stable for SDEPCA. The positive answer to the converse problem of (Q1), we refer to \cite{GuoSiam2016, MaoAutomatica2013, SongJMAA2018, Maoletter2014}. 

In this paper, we will give the positive answer for (Q1), (Q2), (Q3), (Q4). In \cref{sec 2}, we describe the SDEPCA and EM methods along with the definitions of $p$th moment exponential stability for SDE, SDEPCA, EMSDE, EMSDEPCA. \Cref{sec 3}, \cref{sec 4}, \cref{sec 5}, \cref{sec 6} answer the questions (Q1), (Q3), (Q4), (Q2) respectively, the final conclusions are stated in the last section.
\section{Perilimaries}\label{sec 2}
Throughout this paper, unless otherwise specified, we will use the following notations. If $\textbf{A}$ is a vector or matrix, its transpose is denoted by $\textbf{A}^{\textrm{T}}$. If $x\in\textbf{R}^{n}$, then $| x|$ is the Euclidean norm. If $\textbf{A}$ is a matrix, we let $| \mathbf{A}| =\sqrt{trace(\mathbf{A}^{{\rm{T}}}\mathbf{A})}$ be its trace norm. If $D$ is a set, its indicator function is denoted by ${\bf{1}}_{D}$. Moreover, let ($\Omega,\mathcal{F},\mathbb{P}$) be a complete probability space with a filtration $\{\mathcal{F}_{t}\}_{{t}\geq0}$ satisfying the usual conditions (that is, it is right continuous and increasing while $\mathcal{F}_{0}$ contains all $\mathbb{P}$-null sets), and let $\mathbb{E}$ denote the expectation corresponding to $\mathbb{P}$. Let $B(t)$ be a $m$-dimensional Brownian motion defined on the space. Throughout this paper, we set $p\ge 2$.

In this paper, we deal with the following $d$-dimensional nonlinear stochastic differential equations with piecewise continuous arguments (SDEPCAs)
\begin{equation}\label{SDEPCAs}
\begin{cases}
dx(t)=\left[f(x(t))+u_{1}\left(x\left(\left[t/\tau\right]\tau\right)\right)\right]dt+[g(x(t))+u_{2}(x([t/\tau]\tau))]dw(t)\\
x(0)=x_{0}\in\mathbb{R}^{d}
\end{cases}
\end{equation}
on $t\ge 0$, where $w(t)$ is an $m$-dimensional Brownian motion,  $f:\mathbb{R}^{d}\rightarrow \mathbb{R}^{d}$, $g:\mathbb{R}^{d}\rightarrow \mathbb{R}^{d\times m}$, $u_{1}:\mathbb{R}^{d}\rightarrow \mathbb{R}^{d}$ and $u_{2}:\mathbb{R}^{d}\rightarrow \mathbb{R}^{d\times m}$. $\tau$ is a positive constant, $[t/\tau]$ is the integer part of $t/\tau$. We denote $x(t)$ the solution of \eqref{SDEPCAs} with initial data $x(0)=x_{0}$ and $y(t)$ the solution of the following SDEs
\begin{equation}\label{SDEs}
dy(t)=[f(y(t))+u_{1}(y(t))]dt+[g(y(t))+u_{2}(y(t))]dw(t)
\end{equation}
on $t\ge 0$ with initial data $y(0)=x_{0}$. 

In the present paper, we also deal with the application of EM method to SDEPCA \eqref{SDEPCAs} and SDE \eqref{SDEs}. We note that $[t/\tau]\tau=n\tau$ for $t\in[n\tau, (n+1)\tau)$, $n=0, 1, 2, \cdots$, a natural choice for $h$ is $h=\frac{\tau}{m}$, $m\in\mathbb{N}^{+}$. Hence, we have
\begin{equation}\label{SDEPCAs-Euler-1}
X_{n+1}=X_{n}+(f(X_{n})+u_{1}(X_{[n/m]m}))h+(g(X_{n})+u_{2}(X_{[n/m]m}))\Delta w_{n},
\end{equation}
\begin{equation}\label{SDEs-Euler-1}
Y_{n+1}=Y_{n}+(f(Y_{n})+u_{1}(Y_{n}))h+(g(Y_{n})+u_{2}(Y_{n}))\Delta w_{n},
\end{equation}
where $X_{n}$ and $Y_{n}$ are the approximations of $x(t)$ and $y(t)$ at grid points $t=t_{n}=nh$, $n=0, 1, 2, \cdots$, respectively, $\Delta w_{n}=w(t_{n+1})-w(t_{n})$. Let $n=km+l$, $k\in\mathbb{N}^{+}$, $l=0, 1, \cdots, m-1$. Then \eqref{SDEPCAs-Euler-1} and \eqref{SDEs-Euler-1} would reduce to 
\begin{equation}\label{SDEPCAs-Euler-2}
X_{km+l+1}=X_{km+l}+(f(X_{km+l})+u_{1}(X_{km}))h+(g(X_{km+l})+u_{2}(X_{km}))\Delta w_{km+l},
\end{equation}
\begin{equation}\label{SDEs-Euler-2}
Y_{km+l+1}=Y_{km+l}+(f(Y_{km+l})+u_{1}(Y_{km+l}))h+(g(Y_{km+l})+u_{2}(Y_{km+l}))\Delta w_{km+l}.
\end{equation}
\begin{remark}\label{rem1}
If we choose $h=\tau$, then \eqref{SDEPCAs-Euler-1} and \eqref{SDEs-Euler-1} are the same and \eqref{SDEPCAs-Euler-2} and \eqref{SDEs-Euler-2} are the same.
\end{remark}

In spite of the simplicity of the EM method, explicit EM method is the most popular for approximating the solution of the SDE under global Lipschitz condition \cite[see][]{Higham2002, Peter1992, Milstein1995} and has often been used successfully in actual calculations. For further analysis it is more convenient to use continuous-time approximations,
\begin{equation}\label{eq_x_delta}
x_{\Delta}(t)=x_{0}+\int_{0}^{t} f(\bar{x}_{\Delta}(s))+u_{1}(\bar{x}_{\Delta}([s/\tau]\tau))ds+\int_{0}^{t} g(\bar{x}_{\Delta}(s))+u_{2}(\bar{x}_{\Delta}([s/\tau]\tau))dw(s),
\end{equation}
\begin{equation}\label{eq_y_delta}
y_{\Delta}(t)=x_{0}+\int_{0}^{t}f(\bar{y}_{\Delta}(s))+u_{1}(\bar{y}_{\Delta}(s))ds+\int_{0}^{t}g(\bar{y}_{\Delta}(s))+u_{2}(\bar{y}_{\Delta}(s))dw(s),
\end{equation}
where
\begin{equation*}
\bar{x}_{\Delta}(t)=\sum_{n=0}^{\infty}X_{n}{\bf{1}}_{[t_{n}, t_{n+1})}(t),\quad  \bar{y}_{\Delta}(t)=\sum_{n=0}^{\infty}Y_{n}{\bf{1}}_{[t_{n}, t_{n+1})}(t),\quad \forall t\ge 0.
\end{equation*}
We observe that $x_{\Delta}(t_{n})=\bar{x}_{\Delta}(t_{n})=X_{n}$ and $y_{\Delta}(t_{n})=\bar{y}_{\Delta}(t_{n})=Y_{n}$. Consequently, 
\begin{equation*}
x_{\Delta}([t/\tau]\tau)-\bar{x}_{\Delta}([t/\tau]\tau)=0,\quad
y_{\Delta}([t/\tau]\tau)-\bar{y}_{\Delta}([t/\tau]\tau)=0.
\end{equation*}
In this paper, we impose the following standing hypothesis.
\begin{assumption}\label{assumption 1}
Assume that there exists a positive constant $K$ such that 
\begin{equation*}
\begin{split}
| f(x)-f(y)|\vee| g(x)-g(y)|&\vee| u_{1}(x)-u_{1}(y)|\vee| u_{2}(x)-u_{2}(y)| \le K| x-y|,
\end{split}
\end{equation*}
for all $x, y\in\mathbb{R}^{d}$. Assume also that $f(0)=0$, $g(0)=0$, $u_{1}(0)=0$ and $u_{2}(0)=0$.
\end{assumption}
\Cref{assumption 1} implies that
\begin{equation*}
| f(x)|\vee| g(x)|\vee| u_{1}(x)|\vee| u_{2}(x)|\le K| x|
\end{equation*}
for all $x\in\mathbb{R}^{d}$.

We now give our basic definitions, which is cited from \cite{Mao2008}.
\begin{definition}\label{def1}
The equations SDEPCA \eqref{SDEPCAs} and SDE \eqref{SDEs} are said to be $p$th moment exponentially stable if there exist positive constants $M_{1}$, $\gamma_{1}$, $M_{2}$ and $\gamma_{2}$ such that
\begin{equation}\label{exp-sta-x}
\mathbb{E}|x(t)|^{p}\le M_{1}|x_{0}|^{p}e^{-\gamma_{1} t},\quad \forall t\ge 0,
\end{equation}
and 
\begin{equation}\label{exp-sta-y}
\mathbb{E}|y(t)|^{p}\le M_{2}|x_{0}|^{p}e^{-\gamma_{2} t},\quad \forall t\ge 0,
\end{equation}
for any $x_{0}\in\mathbb{R}^{d}$.
\end{definition}

\begin{definition}\label{def2}
For any given step size $h>0$, the Euler-Maruyama numerical methods EMSDEPCA \eqref{SDEPCAs-Euler-1} and EMSDE \eqref{SDEs-Euler-1} are said to be $p$th moment exponentially stable, if there exist positive constants $\lambda_{1}$, $L_{1}$, $\lambda_{2}$ and $L_{2}$ such that 
 \begin{equation}\label{stability}
 \mathbb{E}| X_{n}|^{p} \le L_{1}| x_{0}|^{p}e^{-\lambda_{1}nh},
 \end{equation}
 \begin{equation}\label{exp-sta-Y}
  \mathbb{E}| Y_{n}|^{p}\le L_{2}| x_{0}|^{p}e^{-\lambda_{2}nh},
 \end{equation}
 for any $x_{0}\in\mathbb{R}^{d}$, $n\in\mathbb{N}$.
 \end{definition}
 
 It is known that under \Cref{assumption 1}, for any initial value $x_{0}$ given at time $t=0$, the SDEPCA \eqref{SDEPCAs} and SDE \eqref{SDEs} have a unique continuous solutions on $t\ge 0$ (see \cite{Mao2008}). To emphasize the role of the initial value, we denote the solution $x(t)$ and $y(t)$ by $x(t;0, x_{0})$ and $y(t; 0, x_{0})$, respectively. Of course, we may consider a more general case, for example, where the SDEs and the SDEPCAs have a random initial data $x(0)=\xi$ which is an $\mathcal{F}_{0}$-measurable $\mathbb{R}^{d}$-valued random variable such that $\mathbb{E}|\xi|^{p}<\infty, \forall\ p\ge 0$. In this case, by the Markov property of the solution, we can easily see that the solution satisfies 
 \begin{equation*}
 \mathbb{E}|x(t)|^{p}=\mathbb{E}(\mathbb{E}(|x(t)|^{p}|\mathcal{F}_{0}))\le \mathbb{E}(M_{1}|\xi|^{p}e^{-\gamma_{1}t})=M_{1}\mathbb{E}|\xi|^{p}e^{-\gamma t}.
 \end{equation*}
It is therefore clear why it is enough to consider only the deterministic initial value $x(0)=x_{0}$. 

Let $y(t; s,y(s))$ be the solution of SDE \eqref{SDEs} for $t>s$ with initial value $y(s)$. It is also known that the solutions to SDE \eqref{SDEs} have the following flow property,
\begin{equation*}
y(t;0,x_{0})=y(t;s,y(s)),\quad \forall \ t\ge s>0.
\end{equation*}
Moreover, the solutions of SDE \eqref{SDEs} also have the time-homegeneous Markov property. Hence \eqref{exp-sta-y} implies
\begin{equation*}
\mathbb{E}|y(t; s,\xi)|^{p}\le M_{2}\mathbb{E}|\xi|^{p}e^{-\gamma_{2}(t-s)},\quad \forall\ t\ge s.
\end{equation*}
Given $y_{k}$ for some $k\in\mathbb{N}^{+}$, the process $\{y_{n}\}_{n\ge k}$ can be regard as the process which is produced by EM method applied to the SDE \eqref{SDEs} on $t\ge kh$ with the initial value $y(kh)=y_{k}$. In other words, the process $\{y_{n}\}_{n\ge k}$ is time-homogeneous Markov process. Hence, \cref{exp-sta-Y} is equivalent to the following more general form.
\begin{equation}\label{Y-markov}
\mathbb{E}|y_{n}|^{p}\le L_{2}\mathbb{E}|y_{k}|^{p}e^{-\lambda_{2}(n-k)h}.
\end{equation}
Due to the special feature of the SDEPCA \eqref{SDEPCAs}, the solution $x(t)$ has flow property and the Markov property at the discrete time $t=k\tau\ (k\in\mathbb{N}^{+})$. Hence 
\begin{equation*}
x(t;0,x_{0})=x(t;k\tau,x(k\tau))
\end{equation*}
and \eqref{exp-sta-x} implies
\begin{equation}\label{x-markov}
\mathbb{E}|x(t)|^{p}\le M_{1}\mathbb{E}|x(k\tau)|^{p}e^{-\gamma_{1}(t-k\tau)},\quad \ t\ge k\tau.
\end{equation}
Given $x_{km}$ for some $k\in\mathbb{N}^{+}$, the process $\{x_{n}\}_{n\ge km}$ can be regard as the process which is produced by EM method applied to the SDEPCA \eqref{SDEPCAs} on $t\ge k\tau$ with the initial value $x(k\tau)=x_{km}$. The process $\{x_{n}\}_{n\ge km}$ is time-homogeneous Markov process. Hence, \eqref{stability} is equivalent to the following more general form.
\begin{equation}\label{X_markov}
\mathbb{E}|x_{n}|^{p}\le L_{1}\mathbb{E}|x_{km}|^{p}e^{-\lambda_{1}(n-km)h}.
\end{equation}

\section{SDE \eqref{SDEs} shares the stability with SDEPCA \eqref{SDEPCAs}}\label{sec 3}

In this section, we shall investigate that if the SDEPCA \eqref{SDEPCAs} is $p$th moment exponentially stable with a sufficiently small $\tau$, then the SDE \eqref{SDEs} is also $p$th moment exponentially stable, i.e. give the positive answer to (Q1). To show this, we need several lemmas. The last lemma estimates the difference in the $p$th moment between the solution of the SDE \eqref{SDEs} and that of the SDEPCA \eqref{SDEPCAs}.
\begin{lemma}\label{lem21}
Assume that \Cref{assumption 1} holds. Then for any given constant $T\ge 0$, we have
\begin{equation}\label{eq21}
\sup_{0\le t\le T}\mathbb{E}| x(t)|^{p}\le H_{1}(T, p, K)| x_{0}|^{p},
\end{equation}
where $H_{1}(T, p, K)=e^{2pK[1+(p-1)K]T}$.
\end{lemma}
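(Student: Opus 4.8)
The plan is to derive the moment bound \eqref{eq21} directly from the integral form of \eqref{SDEPCAs} via Itô's formula, Hölder's inequality, the Burkholder–Davis–Gundy (or the Itô-isometry-based moment) inequality, and Gr\"onwall's lemma. First I would fix $T\ge 0$, fix $t\in[0,T]$, and apply Itô's formula to $|x(s)|^{p}$ for $s\in[0,t]$, obtaining
\begin{equation*}
|x(t)|^{p}=|x_{0}|^{p}+\int_{0}^{t}p|x(s)|^{p-2}\langle x(s),\,f(x(s))+u_{1}(x([s/\tau]\tau))\rangle\,ds+\mathcal{I}_{\mathrm{mart}}(t)+\mathcal{I}_{\mathrm{quad}}(t),
\end{equation*}
where $\mathcal{I}_{\mathrm{mart}}$ is the stochastic integral term (a local martingale, zero expectation after a standard localization argument using a stopping-time sequence $\sigma_k\uparrow\infty$) and $\mathcal{I}_{\mathrm{quad}}(t)=\tfrac{p}{2}\int_{0}^{t}|x(s)|^{p-2}\big(|g(x(s))+u_2(x([s/\tau]\tau))|^{2}+(p-2)|x(s)|^{-2}|(g+u_2)^{\mathrm{T}}x|^{2}\big)ds$ is the second-order term. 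Using the linear-growth consequence of \Cref{assumption 1}, namely $|f(x)|\vee|g(x)|\vee|u_1(x)|\vee|u_2(x)|\le K|x|$, one bounds $|f(x(s))+u_1(x([s/\tau]\tau))|\le K(|x(s)|+|x([s/\tau]\tau)|)$ and similarly for the diffusion, so that the drift term is controlled by $pK|x(s)|^{p-1}(|x(s)|+|x([s/\tau]\tau)|)$ and the quadratic term by $\tfrac{p(p-1)}{2}K^{2}|x(s)|^{p-2}(|x(s)|+|x([s/\tau]\tau)|)^{2}$.

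Next I would take expectations, kill the martingale term, and use Young's inequality (in the form $a^{p-1}b\le\frac{p-1}{p}a^{p}+\frac{1}{p}b^{p}$ and $a^{p-2}b^{2}\le\frac{p-2}{p}a^{p}+\frac{2}{p}b^{p}$) to convert every mixed product $|x(s)|^{p-1}|x([s/\tau]\tau)|$ and $|x(s)|^{p-2}|x([s/\tau]\tau)|^{2}$ into a sum of $\mathbb{E}|x(s)|^{p}$ and $\mathbb{E}|x([s/\tau]\tau)|^{p}$ terms. Because $[s/\tau]\tau\le s$, we have $\mathbb{E}|x([s/\tau]\tau)|^{p}\le\sup_{0\le r\le s}\mathbb{E}|x(r)|^{p}$, so if we set $\phi(t):=\sup_{0\le r\le t}\mathbb{E}|x(r)|^{p}$ and note $\phi$ is nondecreasing, the resulting inequality reads
\begin{equation*}
\mathbb{E}|x(t)|^{p}\le|x_{0}|^{p}+C(p,K)\int_{0}^{t}\phi(s)\,ds,
\end{equation*}
with $C(p,K)=2pK+2p(p-1)K^{2}=2pK[1+(p-1)K]$ after collecting the constants from the two Young-inequality steps and the elementary bound $(|x(s)|+|x([s/\tau]\tau)|)^{2}\le 2|x(s)|^{2}+2|x([s/\tau]\tau)|^{2}$. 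Since the right-hand side is already nondecreasing in $t$, taking the supremum over $[0,t]$ on the left gives $\phi(t)\le|x_{0}|^{p}+C(p,K)\int_{0}^{t}\phi(s)\,ds$, and Gr\"onwall's inequality yields $\phi(t)\le|x_{0}|^{p}e^{C(p,K)t}$, hence $\phi(T)\le|x_{0}|^{p}e^{2pK[1+(p-1)K]T}=H_{1}(T,p,K)|x_{0}|^{p}$, which is \eqref{eq21}.

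The main obstacle, and the only genuinely delicate point, is the localization needed to justify discarding the stochastic-integral term's expectation and to make Gr\"onwall applicable: a priori one only knows $\mathbb{E}|x(t)|^{p}<\infty$ for each $t$ (from the existence theory cited after \Cref{assumption 1}), so I would first run the whole argument with $t$ replaced by $t\wedge\sigma_{k}$ for a localizing sequence of stopping times $\sigma_{k}=\inf\{t:|x(t)|\ge k\}$, obtain the bound uniformly in $k$ with the constant $H_1$ independent of $k$, and then let $k\to\infty$ using Fatou's lemma on the left-hand side and monotone/dominated convergence on the (already finite, by the assumed $p$th-moment integrability) right-hand side. The bookkeeping of the constants to land exactly on $H_1(T,p,K)=e^{2pK[1+(p-1)K]T}$ is routine once the Young-inequality splits are chosen as above, so I would not belabor it.
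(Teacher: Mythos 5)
Your proposal is correct and follows essentially the same route as the paper: Itô's formula applied to $|x(t)|^{p}$, the linear-growth consequence of Assumption 2.1, Young's inequality to reduce the mixed terms to $p$th moments dominated by $\sup_{0\le r\le s}\mathbb{E}|x(r)|^{p}$, and Gr\"onwall's inequality, landing on the same constant $2pK[1+(p-1)K]$. The only difference is that you spell out the localization/stopping-time step that the paper leaves implicit, which is a welcome addition rather than a deviation.
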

\begin{proof}
In view of It\^{o} formula and \Cref{assumption 1}, we obtain
\begin{equation*}
\begin{split}
\mathbb{E}| x(v)|^{p}\le &| x_{0}|^{p}+\mathbb{E}\int_{0}^{v}p|x(s)|^{p-1}|f(x(s))+u_{1}(x([s/\tau]\tau))|\\
&+\frac{p(p-1)}{2}|x(s)|^{p-2}|g(x(s))+u_{2}(x([s/\tau]\tau))|^{2}ds\\
\le& | x_{0}|^{p}+\mathbb{E}\int_{0}^{v}pK|x(s)|^{p-1}(|x(s)|+|x([s/\tau]\tau)|)\\
&+ p(p-1)K^{2}|x(s)|^{p-2}(|x(s)|^{2}+|x([s/\tau]\tau)|^{2}ds\\
\le& |x_{0}|^{p}+2pK[1+(p-1)K]\int_{0}^{v}\sup_{0\le u\le s}\mathbb{E}|x(u)|^{p}ds
\end{split}
\end{equation*}
Taking the supremum value of both sides over $v\in[0, t]$, we have
\begin{equation*}
\sup_{0\le v\le t}\mathbb{E}| x(v)|^{p}\le | x_{0}|^{p}+2pK[1+(p-1)K]\int_{0}^{t}\sup_{0\le u\le s}\mathbb{E}| x(u)|^{p}ds.
\end{equation*}
The desired result \eqref{eq21} follows from the well-known Gronwall inequality.
\end{proof}

\begin{lemma}\label{lem32}
Assume that \Cref{assumption 1} holds. Then for any $t\ge 0$,
\begin{equation*}\label{eq24}
\mathbb{E}| x(t)-x([t/\tau]\tau)|^{p}\le C_{1}(K, p, \tau)\tau^{\frac{p}{2}}e^{2pK[1+(p-1)K]t}| x_{0}|^{p}.
\end{equation*}
where $C_{1}(K, p, \tau)=2^{2p-1}K^{p}\left[\tau^{\frac{p}{2}}+\left(p(p-1)/2\right)^{\frac{p}{2}}\right]$.
\end{lemma}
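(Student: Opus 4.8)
The plan is to localize the estimate to a single interval between consecutive breakpoints of the piecewise continuous argument. Fix $t\ge 0$ and set $n=[t/\tau]$, so that $[t/\tau]\tau=n\tau$ and $0\le t-n\tau<\tau$. Writing \eqref{SDEPCAs} in integral form on the interval $[n\tau,t]$ gives
\begin{equation*}
x(t)-x(n\tau)=\int_{n\tau}^{t}\bigl[f(x(s))+u_{1}(x(n\tau))\bigr]\,ds+\int_{n\tau}^{t}\bigl[g(x(s))+u_{2}(x(n\tau))\bigr]\,dw(s),
\end{equation*}
and the elementary inequality $|a+b|^{p}\le 2^{p-1}(|a|^{p}+|b|^{p})$ reduces the claim to estimating the $p$th moments of the drift and diffusion integrals separately.

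For the drift integral I would apply the H\"{o}lder (power-mean) inequality in the time variable, which produces a factor $(t-n\tau)^{p-1}\le\tau^{p-1}$, then use the linear growth bound $|f(x)|\vee|u_{1}(x)|\le K|x|$ implied by \Cref{assumption 1} together with $|a+b|^{p}\le 2^{p-1}(|a|^{p}+|b|^{p})$ once more, and finally bound $\mathbb{E}|x(s)|^{p}$ for $s\in[n\tau,t]$ and $\mathbb{E}|x(n\tau)|^{p}$, both having argument $\le t$, by $e^{2pK[1+(p-1)K]t}|x_{0}|^{p}$ via \cref{lem21}. Since $t-n\tau\le\tau$ this produces a term of order $\tau^{p-1}\cdot\tau=\tau^{p}$ times the exponential. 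For the diffusion integral I would invoke the standard $L^{p}$ moment inequality for It\^{o} integrals, valid because $p\ge 2$,
\begin{equation*}
\mathbb{E}\Bigl|\int_{n\tau}^{t}G(s)\,dw(s)\Bigr|^{p}\le\Bigl(\frac{p(p-1)}{2}\Bigr)^{p/2}(t-n\tau)^{(p-2)/2}\,\mathbb{E}\int_{n\tau}^{t}|G(s)|^{p}\,ds,
\end{equation*}
and then repeat the same linear-growth and \cref{lem21} estimates; here the factor $(t-n\tau)^{(p-2)/2}\cdot\tau\le\tau^{p/2}$ yields the term of order $\tau^{p/2}$ times the exponential.

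Adding the drift and diffusion bounds and collecting the powers of $2$, $K$ and $\tau$ gives exactly $2^{2p-1}K^{p}\bigl[\tau^{p}+(p(p-1)/2)^{p/2}\tau^{p/2}\bigr]e^{2pK[1+(p-1)K]t}|x_{0}|^{p}$, which equals $C_{1}(K,p,\tau)\tau^{p/2}e^{2pK[1+(p-1)K]t}|x_{0}|^{p}$ as claimed. There is no real analytic obstacle here; the only point requiring attention is the bookkeeping of the constant, namely that the two applications of $|a+b|^{p}\le 2^{p-1}(|a|^{p}+|b|^{p})$ (one for $x(t)-x(n\tau)$, one for $f+u_{1}$ respectively $g+u_{2}$) each contribute a factor $2^{p-1}$ and combine with the factor $2$ coming from $\mathbb{E}|x(s)|^{p}+\mathbb{E}|x(n\tau)|^{p}$ to produce the announced $2^{2p-1}$, and that the interval length $t-n\tau$ must be replaced by $\tau$ consistently in both estimates.
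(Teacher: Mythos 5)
Your proposal is correct and follows essentially the same route as the paper: integral form on $[n\tau,t]$, the elementary inequality $|a+b|^p\le 2^{p-1}(|a|^p+|b|^p)$, H\"older for the drift, the $L^p$ moment inequality for the It\^o integral, linear growth from \Cref{assumption 1}, and \cref{lem21} to control the moments, with the same constant bookkeeping yielding $C_1(K,p,\tau)\tau^{p/2}$. The only cosmetic difference is that the paper keeps $\sup_{0\le u\le s}\mathbb{E}|x(u)|^p$ inside the time integral and bounds $\int_{n\tau}^{t}e^{2pK[1+(p-1)K]s}\,ds$ by $\tau e^{2pK[1+(p-1)K]t}$, whereas you bound each moment by its value at $t$ first; the two are equivalent.
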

\begin{proof}
By basic inequality, H$\ddot{o}$lder inequality, moment inequality and \Cref{assumption 1},  we obtain
\begin{equation*}
\begin{split}
\mathbb{E}| x(t)-x([t/\tau]\tau)|^{p}\le& 2^{p-1}\tau^{p-1}\mathbb{E}\int_{[t/\tau]\tau}^{t}| f(x(s))+u_{1}(x([s/\tau]\tau))|^{p}ds\\
&+ 2^{p-1}\left(\frac{p(p-1)}{2}\right)^{\frac{p}{2}}\tau^{\frac{p-2}{2}}\mathbb{E}\int_{[t/\tau]\tau}^{t}| g(x(s))+u_{2}(x([s/\tau]\tau))|^{p}ds\\
\le& C_{1}(K, p, \tau)\tau^{\frac{p}{2}-1}\int_{[t/\tau]\tau}^{t}\left(\sup_{0\le u\le s}\mathbb{E}| x(u)|^{p}\right)ds
\end{split}
\end{equation*}
It comes from \eqref{eq21} that 
\begin{equation*}\label{t_tau_mean}
\begin{split}
\mathbb{E}| x(t)-x([t/\tau]\tau)|^{p}
\le& C_{1}(K, p, \tau)\tau^{\frac{p}{2}-1}\int_{[t/\tau]\tau}^{t}e^{2pK[1+(p-1)K]s}| x_{0}|^{p}ds\\
\le& C_{1}(K, p, \tau)\tau^{\frac{p}{2}}e^{2pK[1+(p-1)K]t}| x_{0}|^{p}
\end{split}
\end{equation*}
The lemma is proved.
\end{proof}
The following lemma estimates the difference in the $p$th moment between $x(t)$ and $y(t)$.
\begin{lemma}\label{x_y_error_moment}
Let \Cref{assumption 1} hold. Then 
\begin{equation*}
\mathbb{E}| x(t)-y(t)|^{p}\le C_{2}(K, p, \tau)\tau
^{\frac{p}{2}}|x_{0}|^{p}\left(e^{C_{3}(p, K)t}-1\right),
\end{equation*}
for all $x_{0}\in \mathbb{R}^{d}$ and $t\ge 0$, where $C_{2}$ and $C_{3}$ are defined as \eqref{C_2} and \eqref{C_3}, respectively.
\end{lemma}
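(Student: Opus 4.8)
The plan is to estimate $x(t)-y(t)$ directly via the integral representations of the two processes, controlling the drift and diffusion discrepancies by a combination of the global Lipschitz assumption and the already-established piecewise-continuous fluctuation bound of \Cref{lem32}. Writing
$$
x(t)-y(t)=\int_0^t \bigl[f(x(s))-f(y(s))\bigr]+\bigl[u_1(x([s/\tau]\tau))-u_1(y(s))\bigr]\,ds+\int_0^t \bigl[g(x(s))-g(y(s))\bigr]+\bigl[u_2(x([s/\tau]\tau))-u_2(y(s))\bigr]\,dw(s),
$$
I would apply It\^o's formula to $|x(t)-y(t)|^p$ exactly as in the proof of \Cref{lem21}, producing a drift term with $p|x(s)-y(s)|^{p-1}$ times the sum of the $f$- and $u_1$-differences, and a correction term with $\tfrac{p(p-1)}{2}|x(s)-y(s)|^{p-2}$ times the squared $g$- and $u_2$-differences. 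The key algebraic move is the splitting $u_1(x([s/\tau]\tau))-u_1(y(s)) = \bigl[u_1(x([s/\tau]\tau))-u_1(x(s))\bigr]+\bigl[u_1(x(s))-u_1(y(s))\bigr]$, so that by Lipschitz continuity each such term is bounded by $K|x(s)-x([s/\tau]\tau)|+K|x(s)-y(s)|$, and similarly for $u_2$.

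Next I would use Young's inequality (e.g. $a^{p-1}b\le \tfrac{p-1}{p}a^p+\tfrac1p b^p$ and its $|{\cdot}|^{p-2}$-analogue) to convert the mixed terms $|x(s)-y(s)|^{p-1}|x(s)-x([s/\tau]\tau)|$ and $|x(s)-y(s)|^{p-2}|x(s)-x([s/\tau]\tau)|^2$ into pure $p$th powers, yielding a bound of the schematic form
$$
\mathbb{E}|x(t)-y(t)|^p \le C_3(p,K)\int_0^t \mathbb{E}|x(s)-y(s)|^p\,ds + C\int_0^t \mathbb{E}|x(s)-x([s/\tau]\tau)|^p\,ds,
$$
where $C_3(p,K)$ collects the constants multiplying the self-coupling term. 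Plugging in \Cref{lem32} for $\mathbb{E}|x(s)-x([s/\tau]\tau)|^p$, the second integral becomes $\le C_2(K,p,\tau)\tau^{p/2}|x_0|^p$ times $\int_0^t e^{2pK[1+(p-1)K]s}\,ds$ after absorbing the exponential; one should make sure $C_3(p,K)$ is chosen to dominate $2pK[1+(p-1)K]$ so the two exponential rates can be merged. Then Gronwall's inequality applied to $\varphi(t):=\mathbb{E}|x(t)-y(t)|^p$ gives precisely the stated bound $C_2(K,p,\tau)\tau^{p/2}|x_0|^p(e^{C_3(p,K)t}-1)$, the ``$-1$'' arising because $\varphi(0)=0$.

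The routine parts are the It\^o expansion and the Young/H\"older bookkeeping, which mirror \Cref{lem21}--\Cref{lem32} almost verbatim. The main obstacle — really a matter of care rather than difficulty — is bookkeeping the constants so that the final $C_2$ and $C_3$ match the definitions referenced as \eqref{C_2} and \eqref{C_3}: one must track how the factors $2^{p-1}$, $K^p$, the $\left(\tfrac{p(p-1)}{2}\right)^{p/2}$ from the diffusion moment inequality, and the constant $H_1(T,p,K)$ from \Cref{lem21} (hidden inside $C_1$) propagate through the Young splitting and the final Gronwall step, and in particular to verify that $C_3(p,K)$ can be taken large enough to majorize the exponent in \Cref{lem32} so that the Gronwall integral telescopes cleanly into $e^{C_3(p,K)t}-1$. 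Once the constants are fixed consistently, the estimate follows.
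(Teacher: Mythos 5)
Your proposal follows essentially the same route as the paper: It\^o's formula on $|x(t)-y(t)|^{p}$, the splitting of $u_{i}(x([s/\tau]\tau))-u_{i}(y(s))$ through $x(s)$, Young's inequality to isolate a self-coupling term and a term controlled by \Cref{lem32}, and Gronwall to produce the $e^{C_{3}t}-1$ factor. Your constant-tracking concern is exactly the check the paper performs (one verifies $C_{3}(p,K)$ equals the Gronwall rate plus $2pK[1+(p-1)K]$, so the exponentials merge), and the argument is correct.
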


\begin{proof}
Using It\^{o} formula and \Cref{assumption 1}, we have
\begin{eqnarray*}
&&\mathbb{E}| x(t)-y(t)|^{p}\nonumber\\
&\le& \mathbb{E}\int_{0}^{t}pK|x(s)-y(s)|^{p-1}(|x(s)-y(s)|+|x([s/\tau]\tau)-y(s)|)\nonumber\\
&&+p(p-1)K^{2}|x(s)-y(s)|^{p-2}(|x(s)-y(s)|^{2}+|x([s/\tau]\tau)-y(s)|^{2})ds\nonumber\\
&=& \left(pK+p(p-1)K^{2}\right)\int_{0}^{t}\mathbb{E}|x(s)-y(s)|^{p}ds\nonumber\\
&&+pK\mathbb{E}\int_{0}^{t}|x(s)-y(s)|^{p-1}|x([s/\tau]\tau)-y(s)|ds\nonumber\\
&&+p(p-1)K^{2}\mathbb{E}\int_{0}^{t}|x(s)-y(s)|^{p-2}|x([s/\tau]\tau)-y(s)|^{2}ds\nonumber\\
\end{eqnarray*}
By Young inequality, we have
\begin{eqnarray}\label{x_y_2}
&&\mathbb{E}| x(t)-y(t)|^{p}\nonumber\\
&\le& \left[(2p-1+2^{p-1})K+2(p-1)(p-1+2^{p-1})K^{2}\right]\int_{0}^{t}\mathbb{E}|x(s)-y(s)|^{p}ds\nonumber\\
&&+2^{p-1}(K+2(p-1)K^{2})\int_{0}^{t}\mathbb{E}|x([s/\tau]\tau)-x(s)|^{p}ds
\end{eqnarray}
In view of \Cref{lem32}, we have
\begin{eqnarray}\label{x_y_2_2}
&&2^{p-1}(K+2(p-1)K^{2})\int_{0}^{t}\mathbb{E}|x([s/\tau]\tau)-x(s)|^{p}ds\nonumber\\
&\le& 2^{p-1}(K+2(p-1)K^{2})\int_{0}^{t}C_{1}(K, p, \tau)\tau^{\frac{p}{2}}e^{2pK[1+(p-1)K]s}| x_{0}|^{p}ds\nonumber\\
&=&\frac{ 2^{p-1}(1+2(p-1)K)\tau^{\frac{p}{2}}C_{1}(K, p, \tau)|x_{0}|^{p}}{p[2+2(p-1)K]}\left(e^{2pK[1+(p-1)K]t}-1\right)\nonumber\\
&\le& \frac{2^{p-1}\tau^{\frac{p}{2}}C_{1}(K, p, \tau)|x_{0}|^{p}}{p}\left(e^{2pK[1+(p-1)K]t}-1\right)
\end{eqnarray}
Substituting \eqref{x_y_2_2} into \eqref{x_y_2} and using Gronwall inequality, we show that
\begin{equation*}
\mathbb{E}| x(t)-y(t)|^{p}\le C_{2}(K, p, \tau)\tau
^{\frac{p}{2}}|x_{0}|^{p}\left(e^{C_{3}(p, K)t}-1\right),
\end{equation*}
where
\begin{equation}\label{C_2}
C_{2}(K, p, \tau)=\frac{2^{p-1}C_{1}(K, p, \tau)}{p},
\end{equation}
\begin{equation}\label{C_3}
C_{3}(p, K)=[4p-1+2^{p-1}+2(p-1)(2p-1+2^{p-1})K]K.
\end{equation}
This completes the proof of \Cref{x_y_error_moment}.
\end{proof}

Our positive answer to (Q1) is stated in the following theorem.
\begin{theorem}\label{theorem 3}
Let \Cref{assumption 1} hold and the SDEPCA \eqref{SDEPCAs} is $p$th moment exponentially stable, i.e. $\mathbb{E}|x(t)|^{p}\le M_{1}e^{-\gamma_{1}t}|x_{0}|^{p}$. Choose $\delta\in(0, 1)$, if $\tau$ satisfies
\begin{equation}\label{Q1tau}
R(\tau)=\delta+2^{p-1}C_{2}(K, p, \tau)\tau
^{\frac{p}{2}}\left(e^{C_{3}(p, K)\left(\frac{\ln \left(\frac{2^{p-1}M_{1}}{\delta}\right)}{\gamma_{1}}+\tau\right)}-1\right)<1,
\end{equation}
then the SDE \eqref{SDEs} is also $p$th moment exponentially stable, where $C_{2}(K,p, \tau)$ and $C_{3}(p, K)$ are defined in \Cref{x_y_error_moment}.
\end{theorem}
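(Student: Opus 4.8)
The plan is to bound $\mathbb{E}|y(t)|^p$ on a single time window $[0,T]$, with $T$ chosen as a suitable integer multiple of $\tau$, by comparing the SDE solution with the SDEPCA solution, and then to propagate the resulting contraction to all of $[0,\infty)$ via the flow and time-homogeneous Markov properties recorded around \eqref{x-markov}.

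First I would start from the elementary bound $|y(t)|^p\le 2^{p-1}|x(t)|^p+2^{p-1}|x(t)-y(t)|^p$, take expectations, and insert the hypothesis $\mathbb{E}|x(t)|^p\le M_1 e^{-\gamma_1 t}|x_0|^p$ together with \Cref{x_y_error_moment}, obtaining
\[
\mathbb{E}|y(t)|^p\le 2^{p-1}M_1 e^{-\gamma_1 t}|x_0|^p+2^{p-1}C_2(K,p,\tau)\tau^{\frac p2}\bigl(e^{C_3(p,K)t}-1\bigr)|x_0|^p,\qquad t\ge 0.
\]
Since taking $t=0$ in \eqref{exp-sta-x} forces $M_1\ge 1$, the quantity $T^\ast:=\gamma_1^{-1}\ln(2^{p-1}M_1/\delta)$ is positive; let $k$ be the least positive integer with $k\tau\ge T^\ast$, and put $T:=k\tau\in[T^\ast,T^\ast+\tau)$. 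Evaluating the previous display at $t=T$, the first term is at most $\delta|x_0|^p$ because $e^{-\gamma_1 T}\le e^{-\gamma_1 T^\ast}$, and the second is at most $2^{p-1}C_2(K,p,\tau)\tau^{p/2}\bigl(e^{C_3(p,K)(T^\ast+\tau)}-1\bigr)|x_0|^p$ by monotonicity of $t\mapsto e^{C_3(p,K)t}$; summing gives $\mathbb{E}|y(T)|^p\le R(\tau)|x_0|^p$ with $R(\tau)$ exactly as in \eqref{Q1tau}, and $R(\tau)<1$ by hypothesis.

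Next I would iterate over the grid $\{nT:n\in\mathbb N\}$. Because $T=k\tau$ is a multiple of $\tau$, the SDEPCA restarted at time $nT$ from the $\mathcal F_{nT}$-measurable value $x(nT)$ has exactly the same $\tau$-periodic lag structure, so \eqref{x-markov} applies to it, and \Cref{x_y_error_moment} holds verbatim for that shifted problem after conditioning on $\mathcal F_{nT}$ (the only change being that $|x_0|^p$ is replaced by $\mathbb E|x(nT)|^p$). Combining these with the flow and Markov properties of the SDE, the argument of the previous paragraph run on $[nT,(n+1)T]$ with initial value $y(nT)$ yields $\mathbb E|y((n+1)T)|^p\le R(\tau)\,\mathbb E|y(nT)|^p$, hence $\mathbb E|y(nT)|^p\le R(\tau)^n|x_0|^p$ for every $n\in\mathbb N$.

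Finally, to fill in between grid points I would use the SDE analogue of \Cref{lem21}, namely $\sup_{s\le t\le s+T}\mathbb E|y(t;s,\xi)|^p\le H(T,p,K)\,\mathbb E|\xi|^p$, proved by the identical It\^o--Gronwall computation; applied on $[nT,(n+1)T]$ with $\xi=y(nT)$ this gives $\mathbb E|y(t)|^p\le H(T,p,K)\,\mathbb E|y(nT)|^p$ there. Writing $n=\lfloor t/T\rfloor$ and using $R(\tau)^n\le R(\tau)^{-1}e^{-\gamma_2 t}$ with $\gamma_2:=-T^{-1}\ln R(\tau)>0$ then yields $\mathbb E|y(t)|^p\le \bigl(H(T,p,K)/R(\tau)\bigr)\,e^{-\gamma_2 t}|x_0|^p$, which is \eqref{exp-sta-y} with $M_2=H(T,p,K)/R(\tau)$. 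I expect the main obstacle to be precisely the bookkeeping in the iteration step: one must keep $T$ an integer multiple of $\tau$ so that the SDEPCA's lag structure is invariant under the shift by $nT$, and one must check that both the error estimate of \Cref{x_y_error_moment} and the decay \eqref{x-markov} carry over to the shifted, randomly initialized equation; the remaining estimates are routine.
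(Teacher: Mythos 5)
Your proposal is correct and follows essentially the same route as the paper: the same decomposition $\mathbb{E}|y|^p\le 2^{p-1}\mathbb{E}|x|^p+2^{p-1}\mathbb{E}|x-y|^p$ at the horizon $T=\hat n\tau$ (your $k\tau$), the same contraction factor $R(\tau)$, the same iteration obtained by restarting the SDEPCA at each multiple of $T$ from the SDE's value $y(nT)$, and the same It\^o--Gronwall interpolation between grid points. The only cosmetic difference is that you express $M_2$ as $H(T,p,K)/R(\tau)$ rather than the paper's explicit exponential constant; the two are equivalent.
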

\begin{proof}
{$\mathit{Step 1.}$} Let us choose a positive integer $\hat{n}$ such that 
\begin{equation*}
\frac{\ln\left( \frac{2^{p-1}M_{1}}{\delta}\right)}{\gamma_{1}\tau}\le \hat{n}<\frac{\ln \left(\frac{2^{p-1}M_{1}}{\delta}\right)}{\gamma_{1}\tau}+1.
\end{equation*}
So $2^{p-1}M_{1}e^{-\gamma_{1}\hat{n}\tau}\le\delta$.
Hence, 
\begin{equation}\label{y_(n+1)_1}
2^{p-1}\mathbb{E}| x(\hat{n}\tau)|^{p}\le 2^{p-1}M_{1}e^{-\gamma_{1}\hat{n}\tau}| x_{0}|^{p}\le \delta|x_{0}|^{p}.
\end{equation}
By virtue of \Cref{x_y_error_moment}, we obtain
\begin{equation*}\label{x_y_(n+1)_1}
\mathbb{E}| x(\hat{n}\tau)-y(\hat{n}\tau)|^{p}\le C_{2}(K, p, \tau)\tau
^{\frac{p}{2}}|x_{0}|^{p}\left(e^{C_{3}(p, K)\hat{n}\tau}-1\right),
\end{equation*}
which together with \eqref{y_(n+1)_1}, we arrive at
\begin{eqnarray*}
\mathbb{E}| y(\hat{n}\tau)|^{p}\le \left[\delta+2^{p-1}C_{2}(K, p, \tau)\tau
^{\frac{p}{2}}\left(e^{C_{3}(p, K)\hat{n}\tau}-1\right)\right]| x_{0}|^{p}\le R(\tau)| x_{0}|^{p}
\end{eqnarray*}
In view of \eqref{Q1tau}, there is a positive constant $\gamma_{2}$ such that $R(\tau)=e^{-\gamma_{2}\hat{n}\tau}$. Consequently, 
\begin{equation*}\label{hatn_1}
\mathbb{E}| y(\hat{n}\tau)|^{p}\le e^{-\gamma_{2}\hat{n}\tau}| x_{0}|^{p}.
\end{equation*}
{$\mathit{Step 2.}$} For any given $k\in \mathbb{N}^{+}$, let $\bar{x}(t)$ be the solution to the SDEPCA \eqref{SDEPCAs} for $t\ge k\hat{n}\tau$ with the initial value $\bar{x}(k\hat{n}\tau)=y(k\hat{n}\tau)$. We have from \eqref{x-markov} that 
\begin{equation}\label{3_x}
\mathbb{E}|\bar{x}((k+1)\hat{n}\tau)|^{p}\le M_{1}\mathbb{E}|y(k\hat{n}\tau)|^{p}e^{-\gamma_{1}\hat{n}\tau}.
\end{equation}
In view of \Cref{x_y_error_moment}, we arrive at
\begin{equation}\label{3_x-y}
\mathbb{E}|\bar{x}((k+1)\hat{n}\tau)-y((k+1)\hat{n}\tau)|^{p}\le C_{2}(K,p,\tau)\tau^{\frac{p}{2}}\mathbb{E}|y(k\hat{n}\tau)|^{p}\left(e^{C_{3}(p,K)\hat{n}\tau}-1\right).
\end{equation}
Using \eqref{3_x} and \eqref{3_x-y}, we can show, in the same way as we did in ${\mathit{Step 1}}$, that
\begin{equation*}
\mathbb{E}|y((k+1)\hat{n}\tau)|^{p}\le \mathbb{E}|y(k\hat{n}\tau)|^{p}e^{-\gamma_{2}\hat{n}\tau}.
\end{equation*}
Consequently,
\begin{eqnarray}\label{eq42}
\mathbb{E}|y(k\hat{n}\tau)|^{p}
\le e^{-\gamma_{2}\hat{n}\tau}\mathbb{E}|y((k-1)\hat{n}\tau)|^{p}\le\cdots\le e^{-k\gamma_{2}\hat{n}\tau}|x_{0}|^{p}
\end{eqnarray} 
Now, for any $t>0$, there is a unique $k$ such that $k\hat{n}\tau\le t<(k+1)\hat{n}\tau$.
In view of It\^{o} formula and \Cref{assumption 1}, similarly as the proof of  \Cref{lem21}, we arrive at
\begin{equation*}\label{ito_y}
\mathbb{E}|y(t)|^{p}\le \mathbb{E}|y(k\hat{n}\tau)|^{p}+2pK\left(1+(p-1)K\right)\int_{k\hat{n}\tau}^{t}\mathbb{E}|y(s)|^{p}ds.
\end{equation*}
By the Gronwall inequality and \eqref{eq42}, we can derive
\begin{eqnarray*}
\mathbb{E}|y(t)|^{p}&\le& \mathbb{E}|y(k\hat{n}\tau)|^{p}e^{2pK\left(1+(p-1)K\right)(t-k\hat{n}\tau)}\nonumber\\
&\le& \mathbb{E}|y(k\hat{n}\tau)|^{p}e^{2pK\left(1+(p-1)K\right)\hat{n}\tau}\nonumber\\
&\le& e^{-k\gamma_{2}\hat{n}\tau}|x_{0}|^{p}e^{2pK\left(1+(p-1)K\right)\hat{n}\tau}\nonumber\\
&\le& M_{2}|x_{0}|^{p}e^{-\gamma_{2} t},
\end{eqnarray*}
where $M_{2}=e^{\left[\gamma_{2}+2pK\left(1+(p-1)K\right)\right]\hat{n}\tau}$. The proof is hence complete.
\end{proof}

\section{EMSDEPCA \eqref{SDEPCAs-Euler-1} shares the stability with EMSDE \eqref{SDEs-Euler-1}}\label{sec 4}
In this section, we shall show that if the EMSDE \eqref{SDEs-Euler-1} is $p$th moment exponentially stable, then the EMSDEPCAs \eqref{SDEPCAs-Euler-1} is also $p$th moment exponentially stable, i.e. give the positive answer to (Q3). It is known from \Cref{rem1} that if $h=\tau$, then EMSDE \eqref{SDEs-Euler-1} and EMSDEPCA \eqref{SDEPCAs-Euler-1} are the same, and the answer for (Q3) is obviously positive. So in this section, we assume $h\neq \tau$.  
 
 \begin{theorem}\label{theorem 4}
 Assume that \Cref{assumption 1} holds. For a step size $h=\tau/m$, the EMSDE \eqref{SDEs-Euler-1} is $p$th moment exponentially stable , i.e. $\mathbb{E}|Y_{n}|^{p}\le L_{2}e^{-\lambda_{2}nh}|x_{0}|^{p}$. Choose $\delta\in(0, 1)$, if $\tau$ satisfies 
 \begin{equation}\label{H4tau}
 2^{p-1}H_{4}\left(2\left(\frac{\ln(2^{p-1}L_{2}/\delta)}{\lambda_{2}}+\tau\right), K, \tau, p\right)\tau^{\frac{p}{2}}+\delta<1
 \end{equation}
 where $H_{4}(T, K,\tau, p)$ is defined in \Cref{lem53}, then the EMSDEPCA \eqref{SDEPCAs-Euler-1} is also $p$th moment exponentially stable.
 \end{theorem}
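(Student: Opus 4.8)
The plan is to mimic, on the grid, the two-step argument used for \Cref{theorem 3}. First I would establish the discrete counterparts of \Cref{lem21} and \Cref{x_y_error_moment} (all under \Cref{assumption 1}): a uniform moment bound $\sup_{0\le nh\le T}\mathbb{E}|X_n|^p\le\Gamma(T,K,\tau,p)|x_0|^p$, obtained by raising \eqref{SDEPCAs-Euler-1} to the $p$th power, taking expectations, using \Cref{assumption 1} and the discrete Gronwall inequality; and the error estimate that defines $H_4$, namely (the content of \Cref{lem53}) $\mathbb{E}|X_n-Y_n|^p\le H_4(nh,K,\tau,p)\,\tau^{p/2}|x_0|^p$. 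For the error estimate I would pass to the continuous-time interpolations $x_\Delta,y_\Delta$ of \eqref{eq_x_delta}--\eqref{eq_y_delta}, apply It\^o's formula to $\mathbb{E}|x_\Delta(t)-y_\Delta(t)|^p$, split each coefficient difference into a current-state part controlled directly by the Lipschitz constant and a delay part such as $u_1(\bar{x}_\Delta([s/\tau]\tau))-u_1(\bar{y}_\Delta(s))$, bound the delay part through $\mathbb{E}|\bar{x}_\Delta(s)-\bar{x}_\Delta([s/\tau]\tau)|^p$ (a discrete analogue of \Cref{lem32}, of order $\tau^{p/2}$), $\mathbb{E}|x_\Delta(s)-\bar{x}_\Delta(s)|^p$ (of order $h^{p/2}\le\tau^{p/2}$) and $\mathbb{E}|x_\Delta(s)-y_\Delta(s)|^p$, and close with Gronwall.

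In Step~1 I would pick the integer $\hat n$ with $\dfrac{\ln(2^{p-1}L_2/\delta)}{\lambda_2\tau}\le\hat n<\dfrac{\ln(2^{p-1}L_2/\delta)}{\lambda_2\tau}+1$, so that $2^{p-1}L_2 e^{-\lambda_2\hat n\tau}\le\delta$ and $\hat n\tau<\dfrac{\ln(2^{p-1}L_2/\delta)}{\lambda_2}+\tau$. Because $\hat n\tau=\hat n m\,h$ is a grid point, the assumed stability of the EMSDE gives $2^{p-1}\mathbb{E}|Y_{\hat nm}|^p\le 2^{p-1}L_2 e^{-\lambda_2\hat n\tau}|x_0|^p\le\delta|x_0|^p$. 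Combining this with $\mathbb{E}|X_{\hat nm}|^p\le 2^{p-1}\mathbb{E}|X_{\hat nm}-Y_{\hat nm}|^p+2^{p-1}\mathbb{E}|Y_{\hat nm}|^p$ and \Cref{lem53} — using that $H_4$ is nondecreasing in its first argument, which is why \eqref{H4tau} evaluates it at $2(\ln(2^{p-1}L_2/\delta)/\lambda_2+\tau)$, a safe over-estimate for the times appearing below — I would obtain $\mathbb{E}|X_{\hat nm}|^p\le\big[\delta+2^{p-1}H_4(\cdot)\tau^{p/2}\big]|x_0|^p$; by \eqref{H4tau} the bracket is $<1$, hence equals $e^{-\lambda_1\hat n\tau}$ for some $\lambda_1>0$.

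In Step~2 I would iterate via the time-homogeneous Markov property. For $k\in\mathbb{N}^+$, $k\hat n m$ corresponds to the time $k\hat n\tau$, a multiple of $\tau$, so by \eqref{X_markov} the tail $\{X_n\}_{n\ge k\hat n m}$ is the EMSDEPCA \eqref{SDEPCAs-Euler-1} restarted from $X_{k\hat n m}$; let $\{\bar Y_n\}_{n\ge k\hat n m}$ be the EMSDE restarted from the same value. Then \eqref{Y-markov} gives $2^{p-1}\mathbb{E}|\bar Y_{(k+1)\hat n m}|^p\le 2^{p-1}L_2 e^{-\lambda_2\hat n\tau}\mathbb{E}|X_{k\hat n m}|^p\le\delta\,\mathbb{E}|X_{k\hat n m}|^p$, while \Cref{lem53} (with $X_{k\hat n m}$ in place of $x_0$) gives $2^{p-1}\mathbb{E}|X_{(k+1)\hat n m}-\bar Y_{(k+1)\hat n m}|^p\le 2^{p-1}H_4(\cdot)\tau^{p/2}\mathbb{E}|X_{k\hat n m}|^p$; repeating the estimate of Step~1 yields $\mathbb{E}|X_{(k+1)\hat n m}|^p\le e^{-\lambda_1\hat n\tau}\mathbb{E}|X_{k\hat n m}|^p$, and hence $\mathbb{E}|X_{k\hat n m}|^p\le e^{-k\lambda_1\hat n\tau}|x_0|^p$ by induction. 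For arbitrary $n$, writing $n=k\hat n m+r$ with $0\le r<\hat n m$ and using the restarted moment bound, $\mathbb{E}|X_n|^p\le\Gamma(\hat n\tau,K,\tau,p)\,\mathbb{E}|X_{k\hat n m}|^p\le\Gamma(\hat n\tau,K,\tau,p)\,e^{-k\lambda_1\hat n\tau}|x_0|^p$, which has the required form $L_1|x_0|^p e^{-\lambda_1 nh}$ with $L_1=\Gamma(\hat n\tau,K,\tau,p)\,e^{\lambda_1\hat n\tau}$.

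The hard part is the discrete error estimate \Cref{lem53}: once it is available in exactly the shape $H_4(T,K,\tau,p)\,\tau^{p/2}$, everything downstream is routine, but it must be proved so that $H_4$ stays bounded as $m\to\infty$ with $\tau$ fixed (equivalently as $h=\tau/m\to0$), because \eqref{H4tau} can be met for small $\tau$ only if $H_4\big(2(\ln(2^{p-1}L_2/\delta)/\lambda_2+\tau),K,\tau,p\big)\tau^{p/2}\to0$ as $\tau\to0$ along the whole family $h=\tau/m$. Concretely one has to separate cleanly the $h$-scale errors (the oscillation of $x_\Delta$ within a single EM step, of order $h^{p/2}\le\tau^{p/2}$) from the genuine $\tau$-scale error (the frozen delay argument over a $\tau$-block), and to keep the constants produced by the discrete Gronwall inequality and by $\Gamma$ dependent only on $K,p,T$ (at worst continuously on $\tau$) and never on $m$ — no factor like $m^{c}$ or $e^{c/h}$ may be allowed to creep in.
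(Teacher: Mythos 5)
Your proposal is correct and follows essentially the same route as the paper: the same choice of $\hat n$, the same splitting $\mathbb{E}|X_n|^p\le 2^{p-1}\mathbb{E}|X_n-Y_n|^p+2^{p-1}\mathbb{E}|Y_n|^p$ combined with \Cref{lem53}, the contraction condition \eqref{H4tau}, and iteration via the Markov restart of the EMSDE at the times $k\hat n\tau$. The only (immaterial) variation is that you establish decay at the block endpoints $k\hat n m$ and fill in the intermediate grid points with the restarted moment bound, whereas the paper bounds $\sup_{i\hat n\tau\le t_n\le(i+1)\hat n\tau}\mathbb{E}|X_n|^p$ over each block directly and uses the moment bound (\Cref{lem23}) only on the initial block $[0,\hat n\tau]$; your caution that $H_4$ must stay bounded as $h=\tau/m\to0$ is exactly what \Cref{lem53} delivers.
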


 The above theorem will be proved below by making use of the following lemmas.
 \begin{lemma}\label{lem23}
Assume that \Cref{assumption 1} holds. Then for any given $T>0$ such that
\begin{equation*}
\sup_{0\le t_{n}\le T}\mathbb{E}|X_{n}|^{p}\le H_{3}(T, p, K)|x_{0}|^{p},
\end{equation*}
where $H_{3}(T, p, K)=e^{2pK\left(1+(p-1)K\right)T}$.
\end{lemma}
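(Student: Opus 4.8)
The plan is to follow the proof of \cref{lem21} almost verbatim, but applied to the continuous-time interpolation $x_{\Delta}(t)$ from \eqref{eq_x_delta} instead of to the true solution; since $x_{\Delta}(t_{n})=X_{n}$, a bound on $\sup_{0\le t\le T}\mathbb{E}|x_{\Delta}(t)|^{p}$ yields the lemma immediately. First I would apply the It\^{o} formula to $|x_{\Delta}(t)|^{p}$, which is legitimate because $p\ge2$ makes $x\mapsto|x|^{p}$ of class $C^{2}$; taking expectations and discarding the stochastic integral by a routine localisation argument gives, for every $v\in[0,t]$,
\begin{align*}
\mathbb{E}|x_{\Delta}(v)|^{p}
&\le |x_{0}|^{p}+\mathbb{E}\int_{0}^{v}p|x_{\Delta}(s)|^{p-1}\bigl|f(\bar{x}_{\Delta}(s))+u_{1}(\bar{x}_{\Delta}([s/\tau]\tau))\bigr|\,ds\\
&\quad+\mathbb{E}\int_{0}^{v}\frac{p(p-1)}{2}|x_{\Delta}(s)|^{p-2}\bigl|g(\bar{x}_{\Delta}(s))+u_{2}(\bar{x}_{\Delta}([s/\tau]\tau))\bigr|^{2}\,ds.
\end{align*}

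Next I would invoke the linear growth bound from \cref{assumption 1} to dominate the drift integrand by $pK|x_{\Delta}(s)|^{p-1}\bigl(|\bar{x}_{\Delta}(s)|+|\bar{x}_{\Delta}([s/\tau]\tau)|\bigr)$ and the diffusion integrand by $p(p-1)K^{2}|x_{\Delta}(s)|^{p-2}\bigl(|\bar{x}_{\Delta}(s)|^{2}+|\bar{x}_{\Delta}([s/\tau]\tau)|^{2}\bigr)$, exactly as in \cref{lem21}. The only genuinely new point is the mismatch between the interpolated factor $|x_{\Delta}(s)|$ (raised to the power $p-1$ or $p-2$) and the step-function values $\bar{x}_{\Delta}$ appearing in the coefficients; I would split the cross terms with Young's inequality, $|x_{\Delta}(s)|^{p-1}|b|\le\frac{p-1}{p}|x_{\Delta}(s)|^{p}+\frac1p|b|^{p}$ and $|x_{\Delta}(s)|^{p-2}|b|^{2}\le\frac{p-2}{p}|x_{\Delta}(s)|^{p}+\frac2p|b|^{p}$, and then use that $\bar{x}_{\Delta}(s)$ and $\bar{x}_{\Delta}([s/\tau]\tau)$ equal values $X_{n}=x_{\Delta}(t_{n})$ of the interpolation at grid points $\le s$, so $\mathbb{E}|\bar{x}_{\Delta}(s)|^{p}\vee\mathbb{E}|\bar{x}_{\Delta}([s/\tau]\tau)|^{p}\le\sup_{0\le u\le s}\mathbb{E}|x_{\Delta}(u)|^{p}$. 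Collecting the constants, they recombine to precisely $2pK[1+(p-1)K]$ (which is why $H_{3}$ coincides with $H_{1}$), so that after taking the supremum over $v\in[0,t]$,
\begin{equation*}
\sup_{0\le v\le t}\mathbb{E}|x_{\Delta}(v)|^{p}\le|x_{0}|^{p}+2pK[1+(p-1)K]\int_{0}^{t}\sup_{0\le u\le s}\mathbb{E}|x_{\Delta}(u)|^{p}\,ds.
\end{equation*}

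Finally the Gronwall inequality gives $\sup_{0\le v\le t}\mathbb{E}|x_{\Delta}(v)|^{p}\le e^{2pK[1+(p-1)K]t}|x_{0}|^{p}$ for all $t\ge0$, and restricting to grid points $t_{n}\le T$ and using $X_{n}=x_{\Delta}(t_{n})$ finishes the proof. The main obstacle here is minor---this is an a priori moment estimate, not part of the stability analysis---and amounts to the bookkeeping forced by the presence of both $x_{\Delta}$ and $\bar{x}_{\Delta}$; once the domination $\mathbb{E}|\bar{x}_{\Delta}(u)|^{p}\le\sup_{0\le r\le u}\mathbb{E}|x_{\Delta}(r)|^{p}$ is recorded, the computation is parallel to \cref{lem21}. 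A secondary technical point is the standard stopping-time localisation needed to justify that the martingale term has zero expectation, which is immediate on the bounded interval $[0,T]$ from the Lipschitz and linear-growth hypotheses.
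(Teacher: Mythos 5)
Your proposal is correct and follows essentially the same route as the paper's own proof: apply the It\^{o} formula to the continuous interpolation $x_{\Delta}$, bound the drift and diffusion terms via \cref{assumption 1} and Young's inequality, dominate $\mathbb{E}|\bar{x}_{\Delta}(s)|^{p}$ and $\mathbb{E}|\bar{x}_{\Delta}([s/\tau]\tau)|^{p}$ by $\sup_{0\le u\le s}\mathbb{E}|x_{\Delta}(u)|^{p}$ to recover the constant $2pK[1+(p-1)K]$, and conclude by Gronwall and $x_{\Delta}(t_{n})=X_{n}$. The constants you obtain match those of the paper exactly, so $H_{3}=H_{1}$ as claimed.
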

\begin{proof}
The proof follows from \Cref{lem21}. But to highlight the importance of numerical solutions, it is given here. In view of It\^{o} formula and \Cref{assumption 1}, we have
\begin{eqnarray*}
\mathbb{E}|x_{\Delta}(v)|^{p}&=&|x_{0}|^{p}+\mathbb{E}\int_{0}^{t}p|x_{\Delta}(s)|^{p-2}x_{\Delta}(s)^{T}(f(\bar{x}_{\Delta}(s))+u_{1}(\bar{x}_{\Delta}([s/\tau]\tau)))\nonumber\\
&&+\frac{p(p-1)}{2}|x_{\Delta}(s)|^{p-2}|g(\bar{x}_{\Delta}(s))+u_{2}(\bar{x}_{\Delta}([s/\tau]\tau))|^{2}ds\\
&\le& |x_{0}|^{p}+\mathbb{E}\int_{0}^{v}pK|x_{\Delta}(s)|^{p-1}(|\bar{x}_{\Delta}(s)|+|\bar{x}_{\Delta}([s/\tau]\tau)|)ds\nonumber\\
&&+p(p-1)K^{2}\mathbb{E}\int_{0}^{v}|x_{\Delta}(s)|^{p-2}\left(|\bar{x}_{\Delta}(s)|^{2}+|\bar{x}_{\Delta}([s/\tau]\tau)|^{2}\right)ds\nonumber\\
&\le& |x_{0}|^{p}+2pK\left(1+(p-1)K\right)\int_{0}^{v}\sup_{0\le u\le s}\mathbb{E}|x_{\Delta}(u)|^{p}ds
\end{eqnarray*}
According to the Gronwall inequality, we obtain
\begin{equation}\label{eq27}
\sup_{0\le t\le T}\mathbb{E}|x_{\Delta}(t)|^{p}\le |x_{0}|^{p}e^{2pK\left(1+(p-1)K\right)T}.
\end{equation}
The proof is completed by noting that $x_{\Delta}(t_{n})=X_{n}$, i.e.
\begin{equation*}
\sup_{0\le t_{n}\le T}\mathbb{E}|X_{n}|^{p}\le |x_{0}|^{p}e^{2pK\left(1+(p-1)K\right)T}.
\end{equation*}
\end{proof}
 The following lemma estimates the difference in the $p$th moment between approximation of EMSDE \eqref{SDEs-Euler-1} and that of EMSDEPCA \eqref{SDEPCAs-Euler-1}.
\begin{lemma}\label{lem53}
Let \Cref{assumption 1} hold. Then for any given positive constant $T>0$, 
\begin{eqnarray*}
\sup_{0\le t_{n}\le T}\mathbb{E}|X_{n}-Y_{n}|^{p}\le H_{4}(T, K, \tau, p)\tau^{\frac{p}{2}}|x_{0}|^{p},
\end{eqnarray*}
where $H_{4}(T, K, \tau, p)=C_{2}(K, p, \tau)\left(e^{C_{3}(p, K)T}-1\right)$, $C_{2}$ and $C_{3}$ are defined in \Cref{x_y_error_moment}.
\end{lemma}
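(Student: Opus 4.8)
The plan is to carry out the argument of \Cref{x_y_error_moment} at the level of the continuous-time Euler interpolations. Recall that $x_\Delta$ and $y_\Delta$ from \eqref{eq_x_delta} and \eqref{eq_y_delta} are genuine It\^{o} processes with $x_\Delta(t_n)=\bar x_\Delta(t_n)=X_n$, $y_\Delta(t_n)=\bar y_\Delta(t_n)=Y_n$, and $\bar x_\Delta([s/\tau]\tau)=x_\Delta([s/\tau]\tau)$ for every $s\ge 0$ (since $[s/\tau]\tau$ is a grid point when $h=\tau/m$). Hence it suffices to bound $\mathbb{E}|x_\Delta(t)-y_\Delta(t)|^p$ for all $t\ge 0$ and then restrict $t$ to the grid points $t_n\le T$.

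First I would record the short-interval estimates that will supply the factor $\tau^{p/2}$. For $s\in[t_n,t_{n+1})$ we have $\bar x_\Delta(s)=x_\Delta(t_n)$, so $x_\Delta(s)-\bar x_\Delta(s)=x_\Delta(s)-x_\Delta(t_n)$ is an increment of $x_\Delta$ over a time interval of length $s-t_n<h\le\tau$, while $x_\Delta(s)-\bar x_\Delta([s/\tau]\tau)=x_\Delta(s)-x_\Delta([s/\tau]\tau)$ is an increment over an interval of length $s-[s/\tau]\tau<\tau$; the same holds for $y_\Delta(s)-\bar y_\Delta(s)$. Applying to \eqref{eq_x_delta}--\eqref{eq_y_delta} the elementary, H\"older and moment inequalities together with the linear-growth consequence of \Cref{assumption 1} and the a priori bound \Cref{lem23} (whose proof gives the analogous bound for the continuous process $x_\Delta$), one obtains exactly as in the proof of \Cref{lem32} that each of $\mathbb{E}|x_\Delta(s)-\bar x_\Delta([s/\tau]\tau)|^p$, $\mathbb{E}|x_\Delta(s)-\bar x_\Delta(s)|^p$ and $\mathbb{E}|y_\Delta(s)-\bar y_\Delta(s)|^p$ is bounded by $C_1(K,p,\tau)\tau^{p/2}e^{2pK[1+(p-1)K]s}|x_0|^p$ (with $h$ in place of $\tau$, hence $\le\tau$, in the last two cases).

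Next I would apply It\^{o}'s formula to $|x_\Delta(t)-y_\Delta(t)|^p$. The drift and diffusion of $x_\Delta-y_\Delta$ are $f(\bar x_\Delta(s))-f(\bar y_\Delta(s))+u_1(\bar x_\Delta([s/\tau]\tau))-u_1(\bar y_\Delta(s))$ and $g(\bar x_\Delta(s))-g(\bar y_\Delta(s))+u_2(\bar x_\Delta([s/\tau]\tau))-u_2(\bar y_\Delta(s))$. Using \Cref{assumption 1}, splitting $\bar x_\Delta(s)-\bar y_\Delta(s)$ and $\bar x_\Delta([s/\tau]\tau)-\bar y_\Delta(s)$ through the triangle inequality into one of the short-interval errors of the previous step plus $x_\Delta(s)-y_\Delta(s)$, and then handling the cross terms $|x_\Delta(s)-y_\Delta(s)|^{p-1}|\cdot|$ and $|x_\Delta(s)-y_\Delta(s)|^{p-2}|\cdot|^2$ with Young's inequality exactly as in the derivation of \eqref{x_y_2} and \eqref{x_y_2_2}, one reaches an inequality of the same shape: a constant times $\int_0^t\mathbb{E}|x_\Delta(s)-y_\Delta(s)|^p\,ds$ plus $\frac{2^{p-1}}{p}C_1(K,p,\tau)\tau^{p/2}|x_0|^p(e^{2pK[1+(p-1)K]t}-1)$, the latter from integrating the short-interval bounds. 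Gronwall's inequality then yields $\mathbb{E}|x_\Delta(t)-y_\Delta(t)|^p\le C_2(K,p,\tau)\tau^{p/2}|x_0|^p(e^{C_3(p,K)t}-1)$ with $C_2,C_3$ exactly as in \eqref{C_2} and \eqref{C_3} (the exponent $C_3$ absorbing both the Gronwall constant and the rate $2pK[1+(p-1)K]$, just as in \Cref{x_y_error_moment}). Taking $t=t_n\le T$ and the supremum over such $n$, using $x_\Delta(t_n)=X_n$ and $y_\Delta(t_n)=Y_n$, gives the claim with $H_4(T,K,\tau,p)=C_2(K,p,\tau)(e^{C_3(p,K)T}-1)$.

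I expect the main obstacle to be the bookkeeping rather than any isolated estimate: unlike \Cref{x_y_error_moment}, there are now two distinct sources of mismatch between $x_\Delta$ and $y_\Delta$ --- the piecewise-constant argument $\bar x_\Delta([s/\tau]\tau)$ in \eqref{eq_x_delta} and the Euler freezing $\bar x_\Delta,\bar y_\Delta$ of the current state --- and one must check that each contributes only $O(\tau^{p/2})$ uniformly on $[0,T]$ and that the several applications of Young's inequality can be arranged so that the final Gronwall constant and $\tau^{p/2}$-coefficient are (at worst) the $C_3(p,K)$ and $C_2(K,p,\tau)$ of \Cref{x_y_error_moment}. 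A minor point worth stating explicitly is that Gronwall's inequality is applied to $t\mapsto\mathbb{E}|x_\Delta(t)-y_\Delta(t)|^p$ on all of $[0,T]$, which is legitimate because $x_\Delta$ and $y_\Delta$ are honest continuous processes; the specialization to the grid points $t=t_n$ is made only at the very end.
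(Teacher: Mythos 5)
Your overall strategy is exactly the paper's: apply It\^{o}'s formula to $|x_{\Delta}(t)-y_{\Delta}(t)|^{p}$ using \eqref{eq_x_delta}--\eqref{eq_y_delta}, estimate the new error source by an analogue of \Cref{lem32}, close with Young and Gronwall so that the computation reduces to that of \Cref{x_y_error_moment}, and only at the end restrict to $t=t_{n}$. The one place where you deviate --- and where your version would not deliver the constants actually claimed --- is the treatment of $\bar{x}_{\Delta}(s)-\bar{y}_{\Delta}(s)$. You propose to split it by the triangle inequality into the two Euler-freezing increments $x_{\Delta}(s)-\bar{x}_{\Delta}(s)$ and $y_{\Delta}(s)-\bar{y}_{\Delta}(s)$ plus the main difference $x_{\Delta}(s)-y_{\Delta}(s)$. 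This is unnecessary: for $s\in[t_{n},t_{n+1})$ one has $\bar{x}_{\Delta}(s)-\bar{y}_{\Delta}(s)=X_{n}-Y_{n}=x_{\Delta}(t_{n})-y_{\Delta}(t_{n})$, a difference of the two interpolants at a \emph{common} grid point, so $\mathbb{E}|\bar{x}_{\Delta}(s)-\bar{y}_{\Delta}(s)|^{p}\le\sup_{0\le u\le s}\mathbb{E}|x_{\Delta}(u)-y_{\Delta}(u)|^{p}$ with no increment estimate at all; likewise $\bar{x}_{\Delta}([s/\tau]\tau)-\bar{y}_{\Delta}(s)$ is split as $(X_{km}-X_{km+l})+(X_{km+l}-Y_{km+l})$, so the only $O(\tau^{p/2})$ source is the discrete analogue \eqref{eq2} of \Cref{lem32}. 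With this bookkeeping the resulting integral inequality is term-for-term the one in the proof of \Cref{x_y_error_moment}, and Gronwall gives exactly the $C_{2}(K,p,\tau)$ and $C_{3}(p,K)$ of \eqref{C_2}--\eqref{C_3}. Your splitting, by contrast, injects the two additional error integrals $\int_{0}^{t}\mathbb{E}|x_{\Delta}(s)-\bar{x}_{\Delta}(s)|^{p}ds$ and $\int_{0}^{t}\mathbb{E}|y_{\Delta}(s)-\bar{y}_{\Delta}(s)|^{p}ds$ together with extra Young cross-terms, so it proves the lemma only with a strictly larger $\tau^{p/2}$-coefficient and a strictly larger Gronwall exponent than the stated $H_{4}=C_{2}(e^{C_{3}T}-1)$ --- a point you half-acknowledge but do not resolve. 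This is harmless for the qualitative conclusion and for the downstream use in \Cref{theorem 4} (any bound of the form $H\tau^{\frac{p}{2}}|x_{0}|^{p}$ suffices there after adjusting the step-size condition), but as written your argument does not establish the lemma with the constants it asserts; the fix is simply to drop the splitting of $\bar{x}_{\Delta}(s)-\bar{y}_{\Delta}(s)$.
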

\begin{proof}
According to \eqref{eq_x_delta}, \eqref{eq_y_delta}, It\^{o} formula and \Cref{assumption 1}, we have
\begin{eqnarray}\label{eq3}
&&\mathbb{E}|x_{\Delta}(v)-y_{\Delta}(v)|^{p}\nonumber\\
&\le& \mathbb{E}\int_{0}^{v}p|x_{\Delta}(s)-y_{\Delta}(s)|^{p-1}|f(\bar{x}_{\Delta}(s))-f(\bar{y}_{\Delta}(s))+u_{1}(\bar{x}_{\Delta}([s/\tau]\tau))-u_{1}(\bar{y}_{\Delta}(s))|\nonumber\\
&&+ \frac{p(p-1)}{2}|x_{\Delta}(s)-y_{\Delta}(s)|^{p-2}|g(\bar{x}_{\Delta}(s))-g(\bar{y}_{\Delta}(s))+u_{2}(\bar{x}_{\Delta}([s/\tau]\tau))-u_{2}(\bar{y}_{\Delta}(s))|^{2}ds\nonumber\\
&\le& \mathbb{E}\int_{0}^{v}pK|x_{\Delta}(s)-y_{\Delta}(s)|^{p-1}(|\bar{x}_{\Delta}(s)-\bar{y}_{\Delta}(s)|+|\bar{x}_{\Delta}([s/\tau]\tau)-\bar{y}_{\Delta}(s)|)\nonumber\\
&&+ p(p-1)K^{2}|x_{\Delta}(s)-y_{\Delta}(s)|^{p-2}(|\bar{x}_{\Delta}(s)-\bar{y}_{\Delta}(s)|^{2}+|\bar{x}_{\Delta}([s/\tau]\tau)-\bar{y}_{\Delta}(s)|^{2})ds\nonumber\\
&\le& (pK+p(p-1)K^{2})\int_{0}^{v}\sup_{0\le u\le s}\mathbb{E}|x_{\Delta}(u)-y_{\Delta}(u)|^{p}ds\nonumber\\
&&+ \mathbb{E}\int_{0}^{v}pK|x_{\Delta}(s)-y_{\Delta}(s)|^{p-1}|\bar{x}_{\Delta}([s/\tau]\tau)-\bar{y}_{\Delta}(s)|ds\nonumber\\
&&+ \mathbb{E}\int_{0}^{v}p(p-1)K^{2}|x_{\Delta}(s)-y_{\Delta}(s)|^{p-2}|\bar{x}_{\Delta}([s/\tau]\tau)-\bar{y}_{\Delta}(s)|^{2}ds\nonumber\\
\end{eqnarray}
Similarly as in the proof of \Cref{lem32}, we obtain
\begin{eqnarray}\label{eq2}
\mathbb{E}|\bar{x}_{\Delta}(t)-\bar{x}_{\Delta}([t/\tau]\tau)|^{p}&=&\mathbb{E}|X_{km+l}-X_{km}|^{p}\nonumber\\
&\le& C_{1}(K, p, \tau)\tau^{\frac{p}{2}}|x_{0}|^{p}e^{2pK\left(1+(p-1)K\right)t}
\end{eqnarray}
Substituting \eqref{eq2} into \eqref{eq3}, we have
\begin{eqnarray*}
&&\mathbb{E}|x_{\Delta}(v)-y_{\Delta}(v)|^{2}\nonumber\\
&\le& \left[(2p-1+2^{p-1})+2(p-1)(p-1+2^{p-1})K\right]K\int_{0}^{v}\sup_{0\le u\le s}\mathbb{E}|x_{\Delta}(u)-y_{\Delta}(u)|^{p}ds\nonumber\\
&&+2^{p-1}\left(1+2(p-1)K\right)K\int_{0}^{v}C_{1}(K, p, \tau)\tau^{\frac{p}{2}}|x_{0}|^{p}e^{2pK\left(1+(p-1)K\right)t}ds
\end{eqnarray*}
Applying the Gronwall inequality, we have
\begin{eqnarray*}\label{eq55}
\sup_{0\le v\le T}\mathbb{E}|x_{\Delta}(v)-y_{\Delta}(v)|^{p}\le C_{2}(K, p, \tau)\tau^{\frac{p}{2}}|x_{0}|^{p}\left(e^{C_{3}(p, K)T}-1\right),
\end{eqnarray*}
where $C_{2}(K, p, \tau)$ and $C_{3}(p, K)$ are defined in Lemma \ref{x_y_error_moment}. For ease of notations, set $H_{4}(T, K, p, \tau)=C_{2}(K, p, \tau)\left(e^{C_{3}(p, K)T}-1\right)$.
The proof is completed by noting that $x_{\Delta}(t_{n})=X_{n}$ and $y_{\Delta}(t_{n})=Y_{n}$, i.e.
\begin{eqnarray*}
\sup_{0\le t_{n}\le T}\mathbb{E}|X_{n}-Y_{n}|^{p}\le H_{4}(T, K, p, \tau)\tau^{\frac{p}{2}}|x_{0}|^{p}.
\end{eqnarray*}
\end{proof}

\begin{proof}[The proof of \Cref{theorem 4}]
Let
\begin{equation*}
\hat{n}=\left[\frac{\ln(\frac{2^{p-1}L_{2}}{\delta})}{\lambda_{2}\tau}\right]+1,
\end{equation*}
which implies that
\begin{equation*}\label{eq56}
2^{p-1}L_{2}e^{-\lambda_{2} \hat{n}\tau}\le \delta,\quad and\quad
\hat{n}\tau\le \frac{\ln(\frac{2^{p-1}L_{2}}{\delta})}{\lambda_{2}}+\tau.
\end{equation*}
By $|a+b|^{p}\le 2^{p-1}|a|^{p}+2^{p-1}|b|^{p}$, we have
\begin{equation*}
\mathbb{E}|X_{n}|^{p}\le 2^{p-1}\mathbb{E}|X_{n}-Y_{n}|^{p}+2^{p-1}\mathbb{E}|Y_{n}|^{p}.
\end{equation*}
According to the $p$th moment exponentially stability of EMSDE \eqref{SDEs-Euler-1} and \Cref{lem53}, we have
\begin{eqnarray*}
\sup_{\hat{n}\tau\le t_{n}\le 2\hat{n}\tau}\mathbb{E}|X_{n}|^{p}
&\le& 2^{p-1}\sup_{0\le t_{n}\le 2\hat{n}\tau}\mathbb{E}|X_{n}-Y_{n}|^{p}+2^{p-1}\sup_{\hat{n}\tau\le t_{n}\le 2\hat{n}\tau}\mathbb{E}|Y_{n}|^{p}\nonumber\\
&\le& \left(2^{p-1}H_{4}(2\hat{n}\tau, K, \tau, p)\tau^{\frac{p}{2}}+2^{p-1}L_{2}e^{-\lambda_{2}\hat{n}\tau}\right)|x_{0}|^{p}\nonumber\\
&\le& \left(2^{p-1}H_{4}(2\hat{n}\tau, K, \tau, p)\tau^{\frac{p}{2}}+\delta\right)|x_{0}|^{p}\nonumber\\
&\le& \left(2^{p-1}H_{4}\left(2\left(\frac{\ln(2^{p-1}L_{2}/\delta)}{\lambda_{2}}+\tau\right), K, \tau, p\right)\tau^{\frac{p}{2}}+\delta\right)|x_{0}|^{p}
\end{eqnarray*}
Let $R(\tau)=2^{p-1}H_{4}\left(2\left(\frac{\ln(2^{p-1}L_{2}/\delta)}{\lambda_{2}}+\tau\right), K, \tau, p\right)\tau^{\frac{p}{2}}+\delta$. 
It is known from \eqref{H4tau} that $R(\tau)<1$. Therefore, we can find a positive constant $\lambda_{1}$ such that 
\begin{equation}\label{eq57*}
R(\tau)<e^{-\lambda_{1}\hat{n}\tau},
\end{equation}
and
\begin{equation}\label{eq57}
\sup_{\hat{n}\tau\le t_{n}\le 2\hat{n}\tau}\mathbb{E}|X_{n}|^{p}\le e^{-\lambda_{1}\hat{n}\tau}|x_{0}|^{p}.
\end{equation}
Let $\{\bar{Y}_{n}\}_{t_{n}\ge \hat{n}\tau}$ be the solution of EMSDE \eqref{SDEs-Euler-1} with initial data $\bar{Y}_{\hat{n}m}=X_{\hat{n}m}$ at initial time $t=\hat{n}\tau$. According to \Cref{lem53}, we have
\begin{equation*}
\sup_{\hat{n}\tau\le t_{n}\le 3\hat{n}\tau}\mathbb{E}|X_{n}-\bar{Y}_{n}|^{p}\le H_{4}\left(2\left(\frac{\ln(2^{p-1}L_{2}/\delta)}{\lambda_{2}}+\tau\right), K, \tau, p\right)\tau^{\frac{p}{2}}\mathbb{E}|X_{\hat{n}m}|^{p}.
\end{equation*}
It comes from \eqref{Y-markov} that
\begin{equation*}
\mathbb{E}|\bar{Y}_{n}|^{p}\le L_{2}e^{-\lambda_{2}(nh-\hat{n}mh)}\mathbb{E}|X_{\hat{n}m}|^{p}.
\end{equation*}
Using similar arguments that produced \eqref{eq57}, we obtain
\begin{eqnarray*}
\sup_{2\hat{n}\tau\le t_{n}\le 3\hat{n}\tau}\mathbb{E}|X_{n}|^{p}&\le& R(\tau)\mathbb{E}|X_{\hat{n}m}|^{p}\le e^{-\lambda_{1}\hat{n}\tau}\mathbb{E}|X_{\hat{n}m}|^{p}\le e^{-\lambda_{1}\hat{n}\tau}\sup_{\hat{n}\tau\le t_{n}\le 2\hat{n}\tau}\mathbb{E}|X_{n}|^{p}
\end{eqnarray*}
By \eqref{eq57}, we obtain
\begin{equation*}
\sup_{2\hat{n}\tau\le t_{n}\le 3\hat{n}\tau}\mathbb{E}|X_{n}|^{p}\le e^{-2\lambda_{1}\hat{n}\tau}|x_{0}|^{p}
\end{equation*}
 Continuing this approach and using \eqref{eq57*}, we have, for any $i=1, 2, \cdots$,
 \begin{eqnarray}\label{eq61}
 \sup_{i\hat{n}\tau\le t_{n}\le (i+1)\hat{n}\tau}\mathbb{E}|X_{n}|^{p}\le e^{-i\lambda_{1}\hat{n}\tau}|x_{0}|^{p}\le \bar{L}_{1}e^{-\lambda_{1}nh}|x_{0}|^{p}
 \end{eqnarray}
 where $\bar{L}_{1}=e^{\lambda_{1}\hat{n}\tau}$.
For $i=0$, by using \Cref{lem23}, we get
\begin{eqnarray*}
\sup_{0\le t_{n}\le \hat{n}\tau}\mathbb{E}|X_{n}|^{p}\le H_{3}(\hat{n}\tau, p, K)|x_{0}|^{p}\le L_{1}|x_{0}|^{p}e^{-\lambda_{1}nh},
\end{eqnarray*}
where $L_{1}=H_{3}(\hat{n}\tau, p, K)e^{\lambda_{1}\hat{n}\tau}>e^{\lambda_{1}\hat{n}\tau}=\bar{L}_{1}$. This, together with \eqref{eq61}, we arrive at for all $n\in\mathbb{N}$
\begin{equation*}
\mathbb{E}|X_{n}|^{p}\le L_{1}|x_{0}|^{p}e^{-\lambda_{1}nh}.
\end{equation*}
\end{proof}

\section{SDEPCA \eqref{SDEPCAs} shares the stability with EMSDEPCA \eqref{SDEPCAs-Euler-1}}\label{sec 5}
In this section, we shall show that for a given step size $h$, if the EMSDEPCA \eqref{SDEPCAs-Euler-1} is $p$th moment exponentially stable, then the SDEPCA \eqref{SDEPCAs} is also $p$th moment exponentially stable with some restriction with $h$, i.e. give the positive answer to (Q4). The first lemma shows that the EMSDEPCA \eqref{SDEPCAs-Euler-1} is convergent in the $p$th moment to SDEPCA \eqref{SDEPCAs}.
\begin{lemma}\label{lem73}
Assume that Assumption \ref{assumption 1} holds. For $T>0$, 
\begin{equation*}
\sup_{0\le t_{n}\le T}\mathbb{E}|x(t_{n})-X_{n}|^{p}\le H_{6}(T, K, p)h^{\frac{p}{2}}|x_{0}|^{p},
\end{equation*}
where $H_{6}(T, K, p)$ is defined as \eqref{H_6}.
\end{lemma}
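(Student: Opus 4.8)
The plan is to run the standard strong-convergence estimate through the continuous-time Euler--Maruyama interpolation $x_{\Delta}$ of \eqref{eq_x_delta}, which coincides with the scheme at the grid points, $x_{\Delta}(t_{n})=X_{n}$. It therefore suffices to bound $\sup_{0\le t\le T}\mathbb{E}|x(t)-x_{\Delta}(t)|^{p}$ and to restrict to $t=t_{n}$ at the end. Writing $e(t)=x(t)-x_{\Delta}(t)$ and applying the It\^{o} formula to $|e(t)|^{p}$ together with \Cref{assumption 1}, exactly as in the proofs of \Cref{lem21} and \Cref{x_y_error_moment}, I obtain
\[
\mathbb{E}|e(t)|^{p}\le \mathbb{E}\int_{0}^{t}\Big[pK|e(s)|^{p-1}\big(|x(s)-\bar{x}_{\Delta}(s)|+|x([s/\tau]\tau)-\bar{x}_{\Delta}([s/\tau]\tau)|\big)+p(p-1)K^{2}|e(s)|^{p-2}\big(|x(s)-\bar{x}_{\Delta}(s)|^{2}+|x([s/\tau]\tau)-\bar{x}_{\Delta}([s/\tau]\tau)|^{2}\big)\Big]ds .
\]

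The next step is the one-step estimate $\mathbb{E}|x_{\Delta}(s)-\bar{x}_{\Delta}(s)|^{p}\le C\,h^{p/2}|x_{0}|^{p}$, valid for every $s\in[0,T]$, with $C$ depending only on $K$, $p$ and $T$. This is proved in the same manner as \Cref{lem32}, with $h$ playing the role of $\tau$: for $s\in[t_{n},t_{n+1})$ the difference $x_{\Delta}(s)-\bar{x}_{\Delta}(s)=x_{\Delta}(s)-X_{n}$ is an It\^{o} integral over an interval of length at most $h$, so the basic inequality, the H\"{o}lder and moment (Burkholder--Davis--Gundy) inequalities, and the linear-growth consequence of \Cref{assumption 1} bound its $p$th moment by a constant times $h^{p/2}\sup_{0\le t_{n}\le T}\mathbb{E}|X_{n}|^{p}$, which is at most $C\,h^{p/2}H_{3}(T,p,K)|x_{0}|^{p}$ by \Cref{lem23}. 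Combined with $|x(s)-\bar{x}_{\Delta}(s)|\le |e(s)|+|x_{\Delta}(s)-\bar{x}_{\Delta}(s)|$ this controls the first of the two error contributions above. For the delayed-argument contribution I use the identity recorded just before \Cref{assumption 1}, namely that $[s/\tau]\tau$ is a grid point, so $\bar{x}_{\Delta}([s/\tau]\tau)=x_{\Delta}([s/\tau]\tau)$ and hence $|x([s/\tau]\tau)-\bar{x}_{\Delta}([s/\tau]\tau)|=|e([s/\tau]\tau)|$ with $\mathbb{E}|e([s/\tau]\tau)|^{p}\le\sup_{0\le u\le s}\mathbb{E}|e(u)|^{p}$ since $[s/\tau]\tau\le s$.

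Substituting these bounds and splitting the mixed terms $|e(s)|^{p-1}|\,\cdot\,|$ and $|e(s)|^{p-2}|\,\cdot\,|^{2}$ by Young's inequality into an $|e(s)|^{p}$ part, a $\mathbb{E}|x_{\Delta}(s)-\bar{x}_{\Delta}(s)|^{p}$ part, and a $\mathbb{E}|e([s/\tau]\tau)|^{p}$ part --- the same manipulation as in \eqref{x_y_2}--\eqref{x_y_2_2} --- yields $\mathbb{E}|e(t)|^{p}\le c_{1}(p,K)\int_{0}^{t}\sup_{0\le u\le s}\mathbb{E}|e(u)|^{p}\,ds+C\,h^{p/2}|x_{0}|^{p}\int_{0}^{t}e^{2pK(1+(p-1)K)s}\,ds$, where $c_{1}(p,K)$ is a constant of the same form as \eqref{C_3}. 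Taking the supremum over $[0,t]$ and applying the Gronwall inequality gives $\sup_{0\le v\le T}\mathbb{E}|x(v)-x_{\Delta}(v)|^{p}\le H_{6}(T,K,p)h^{p/2}|x_{0}|^{p}$, with $H_{6}(T,K,p)$ assembled from $C$, $c_{1}(p,K)$, $H_{3}(T,p,K)$ and $e^{c_{1}(p,K)T}$; restricting to $v=t_{n}$ and recalling $x_{\Delta}(t_{n})=X_{n}$ finishes the proof. The only point requiring care beyond the routine bookkeeping of \Cref{lem32} and \Cref{x_y_error_moment} is that the delayed-argument error re-injects the unknown quantity $\mathbb{E}|e|^{p}$ into the right-hand side; this is harmless precisely because $[s/\tau]\tau\le s$, so that term is absorbed into the Gronwall kernel rather than spoiling the $h^{p/2}$ rate.
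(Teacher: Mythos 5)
Your proposal is correct and follows essentially the same route as the paper's proof: Itô's formula applied to $|x(t)-x_{\Delta}(t)|^{p}$, the splitting $|x(s)-\bar{x}_{\Delta}(s)|\le|x(s)-x_{\Delta}(s)|+|x_{\Delta}(s)-\bar{x}_{\Delta}(s)|$, the key identity $x_{\Delta}([s/\tau]\tau)=\bar{x}_{\Delta}([s/\tau]\tau)$ to reduce the delayed-argument term to the error at an earlier time, the one-step estimate $\mathbb{E}|x_{\Delta}(s)-\bar{x}_{\Delta}(s)|^{p}\le Ch^{p/2}|x_{0}|^{p}$ via the moment bound of \Cref{lem23}, and Gronwall. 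No gaps.
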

\begin{proof}
For any $t\ge 0$, by It\^{o} formula, Assumption \ref{assumption 1} and Young inequality, we obtain
\begin{eqnarray*}
&&\mathbb{E}|x(t)-x_{\Delta}(t)|^{p}\nonumber\\
&\le& \mathbb{E}\int_{0}^{t}pK|x(s)-x_{\Delta}(s)|^{p-1}(|x(s)-\bar{x}_{\Delta}(s)|+|x([s/\tau]\tau))-\bar{x}_{\Delta}([s/\tau]\tau)|)\nonumber\\
&&+p(p-1)K^{2}|x(s)-x_{\Delta}(s)|^{p-2}\left(|x(s)-\bar{x}_{\Delta}(s)|^{2}+|x([s/\tau]\tau)-\bar{x}_{\Delta}([s/\tau]\tau)|^{2}\right)ds\nonumber\\
&\le& 2pK(1+2(p-1)K)\int_{0}^{t}\mathbb{E}\sup_{0\le u\le s}|x(s)-x_{\Delta}(s)|^{p}ds\nonumber\\
&&+pK\mathbb{E}\int_{0}^{t}|x(s)-x_{\Delta}(s)|^{p-1}|x_{\Delta}(s)-\bar{x}_{\Delta}(s)|ds\nonumber\\
&&+pK\mathbb{E}\int_{0}^{t}|x(s)-x_{\Delta}(s)|^{p-1}|x_{\Delta}([s/\tau]\tau)-\bar{x}_{\Delta}([s/\tau]\tau)|ds\nonumber\\
&&+2p(p-1)K^{2}\mathbb{E}\int_{0}^{t}|x(s)-x_{\Delta}(s)|^{p-2}|x_{\Delta}(s)-\bar{x}_{\Delta}(s)|^{2}ds\nonumber\\
&&+2p(p-1)K^{2}\mathbb{E}\int_{0}^{t}|x(s)-x_{\Delta}(s)|^{p-2}|x_{\Delta}([s/\tau]\tau)-\bar{x}_{\Delta}([s/\tau]\tau)|^{2})ds
\end{eqnarray*}
By noting $x_{\Delta}([s/\tau]\tau)-\bar{x}_{\Delta}([s/\tau]\tau)=0$, we have
\begin{eqnarray}\label{eq75}
\mathbb{E}|x(t)-x_{\Delta}(t)|^{p}
&\le& K[3p-1+2(p-1)(3p-2)K]\int_{0}^{t}\mathbb{E}\sup_{0\le u\le s}|x(u)-x_{\Delta}(u)|^{p}ds\nonumber\\
&&+K[1+4(p-1)K]\int_{0}^{t}\mathbb{E}|x_{\Delta}(s)-\bar{x}_{\Delta}(s)|^{p}ds
\end{eqnarray}
Now, we shall give the estimation of the second term of the right hand. For any $t>0$, there exists $k$ and $l$ such that $t_{km+l}\le t< t_{km+l+1}$. Then $\bar{x}_{\Delta}(t)=X_{km+l}=x_{\Delta}(t_{km+l})$. Hence from \eqref{SDEPCAs-Euler-2} we have
\begin{eqnarray*}
&&\mathbb{E}|x_{\Delta}(t)-\bar{x}_{\Delta}(t)|^{p}\nonumber\\
&=&\mathbb{E}\left|(t-t_{km+l})\left(f(X_{km+l})+u_{1}(X_{km})\right)+\left(g(X_{km+l})+u_{2}(X_{km})\right)\left(w(t)-w(t_{km+l})\right)\right|^{p}\nonumber\\
&\le& 2^{2p-1}h^{\frac{p}{2}}K^{p}\left(\mathbb{E}|X_{km+l}|^{p}+\mathbb{E}|X_{km}|^{p}\right)
\end{eqnarray*}
Applying \eqref{eq27}, we obtain
\begin{eqnarray}\label{eq77}
\mathbb{E}|x_{\Delta}(t)-\bar{x}_{\Delta}(t)|^{p}&\le& 2^{2p}h^{\frac{p}{2}}K^{p}|x_{0}|^{p}e^{2pK\left(1+(p-1)K\right)t}
\end{eqnarray}
Substituting \eqref{eq77} into \eqref{eq75}, we obtain
\begin{eqnarray*}
&&\mathbb{E}|x(v)-x_{\Delta}(v)|^{p}\nonumber\\
&\le&  K[3p-1+2(p-1)(3p-2)K]\int_{0}^{t}\mathbb{E}\sup_{0\le u\le s}|x(u)-x_{\Delta}(u)|^{p}ds\nonumber\\
&&+K[1+4(p-1)K]\int_{0}^{t}2^{2p}h^{\frac{p}{2}}K^{p}|x_{0}|^{p}e^{2pK\left(1+(p-1)K\right)s}ds
\end{eqnarray*}
By Gronwall inequality, we have
\begin{equation*}
\sup_{0\le t\le T}\mathbb{E}|x(t)-x_{\Delta}(t)|^{p}\le H_{6}(T, p, K)h^{\frac{p}{2}}|x_{0}|^{p},
\end{equation*}
where 
\begin{equation}\label{H_6}
H_{6}(T, p, K)=[1+4(p-1)K]2^{2p}K^{p+1}e^{KT[5p-1+4(p-1)(2p-1)K]}T.
\end{equation}
By noting $x_{\Delta}(t_{n})=X_{n}$, we get for $t=t_{n}$
\begin{equation*}
\sup_{0\le t_{n}\le T}\mathbb{E}|x(t_{n})-X_{n}|^{p}\le H_{6}(T, p, K)h^{\frac{p}{2}}|x_{0}|^{p}
\end{equation*}
The proof is completed.
\end{proof}

\begin{lemma}\label{lem22}
Assume that \Cref{assumption 1} holds. Then for any $0\le t_{n}\le t\le t_{n+1}\le T$, 
\begin{equation*}
\sup_{0\le t\le T}\mathbb{E}|x(t)-x(t_{n})|^{p}\le H_{7}(T, K, p)h^{\frac{p}{2}}|x_{0}|^{p},
\end{equation*}
where $H_{7}(T, K, p)=2^{2p-1}K^{p}\left[T^{\frac{p}{2}}+\left(\frac{p(p-1)}{2}\right)^{\frac{p}{2}}\right]e^{2pK[1+(p-1)K]T}$.
\end{lemma}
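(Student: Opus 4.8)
The plan is to follow the template of the proof of \Cref{lem32}, but with the mesh parameter $h$ playing on $[t_n,t]$ the role that $\tau$ plays on $[[t/\tau]\tau,t]$ there. First I would write, using \eqref{SDEPCAs},
\[
x(t)-x(t_n)=\int_{t_n}^{t}\bigl(f(x(s))+u_1(x([s/\tau]\tau))\bigr)\,ds+\int_{t_n}^{t}\bigl(g(x(s))+u_2(x([s/\tau]\tau))\bigr)\,dw(s),
\]
and apply $|a+b|^p\le 2^{p-1}|a|^p+2^{p-1}|b|^p$ to split $\mathbb{E}|x(t)-x(t_n)|^p$ into a drift contribution and a diffusion contribution.

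For the drift term I would use the H\"older inequality to extract the factor $(t-t_n)^{p-1}\le h^{p-1}$, then \Cref{assumption 1} in the growth form $|f(x)|\vee|u_1(x)|\le K|x|$ together with $|a+b|^p\le 2^{p-1}(|a|^p+|b|^p)$ to dominate the integrand by $2^{p-1}K^p(|x(s)|^p+|x([s/\tau]\tau)|^p)$; integrating over $[t_n,t]$, whose length is at most $h$, and bounding both $\mathbb{E}|x(s)|^p$ and $\mathbb{E}|x([s/\tau]\tau)|^p$ by $\sup_{0\le u\le T}\mathbb{E}|x(u)|^p$, yields a term of order $h^p$. For the diffusion term I would invoke the moment inequality for It\^o integrals, $\mathbb{E}|\int_{t_n}^{t}\psi(s)\,dw(s)|^p\le (p(p-1)/2)^{p/2}(t-t_n)^{p/2-1}\int_{t_n}^{t}\mathbb{E}|\psi(s)|^p\,ds$, which after the same growth bound produces a term of order $h^{p/2}$.

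Next I would feed in \Cref{lem21}, replacing $\sup_{0\le u\le T}\mathbb{E}|x(u)|^p$ by $H_1(T,p,K)|x_0|^p=e^{2pK[1+(p-1)K]T}|x_0|^p$. Finally, since $t_{n+1}\le T$ forces $h\le T$, I would absorb the drift contribution's $h^p=h^{p/2}h^{p/2}$ into $T^{p/2}h^{p/2}$, collect the two contributions (which also accounts for the bracket $[T^{p/2}+(p(p-1)/2)^{p/2}]$), and take the supremum over $t$ in the mesh interval and over the mesh point $t_n$, arriving at exactly $H_7(T,K,p)=2^{2p-1}K^p[T^{p/2}+(p(p-1)/2)^{p/2}]e^{2pK[1+(p-1)K]T}$. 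No Gronwall argument is needed here, because after the growth estimate the right-hand side involves only $\mathbb{E}|x(u)|^p$, already controlled by \Cref{lem21}, and not $\mathbb{E}|x(t)-x(t_n)|^p$ itself. The only points requiring care are the bookkeeping of which length factor ($h$ on $[t_n,t]$ versus $T$ in the exponential inherited from \Cref{lem21}) enters each term, and quoting the correct exponent $p/2-1$ in the It\^o moment inequality; these are mild technicalities rather than real obstacles.
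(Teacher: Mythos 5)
Your proposal is correct and follows essentially the same route as the paper: the same drift/diffusion decomposition, H\"older's inequality for the drift, the It\^o moment inequality with exponent $p/2-1$ for the diffusion, \Cref{assumption 1} for the linear growth bound, and \Cref{lem21} to control $\sup_{0\le u\le s}\mathbb{E}|x(u)|^{p}$, with the drift's extra factor $(t-t_n)^{p/2}\le T^{p/2}$ accounting for the $T^{p/2}$ in the bracket. The constants you track reproduce $H_{7}(T,K,p)$ exactly, and you are right that no Gronwall step is needed.
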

\begin{proof}
For any $0\le t_{n}\le t\le t_{n+1}\le T$, we have
\begin{eqnarray*}
x(t)-x(t_{n})&=&\int_{t_{n}}^{t}f(x(s))+u_{1}(x([s/\tau]\tau))ds+\int_{t_{n}}^{t}g(x(s))+u_{2}(x([s/\tau]\tau))dw(s).
\end{eqnarray*}
In view of H$\ddot{o}$lder inequality, \Cref{assumption 1} as well as moment inequality, we have
\begin{eqnarray*}
&&\mathbb{E}|x(t)-x(t_{n})|^{p}\nonumber\\
&\le& 2^{2p-2}(t-t_{n})^{\frac{p}{2}-1}K^{p}\left[(t-t_{n})^{\frac{p}{2}}+\left(\frac{p(p-1)}{2}\right)^{\frac{p}{2}}\right]\int_{t_{n}}^{t}\mathbb{E}|x(s)|^{p}+\mathbb{E}|x([s/\tau]\tau)|^{p}ds\nonumber\\
&\le& 2^{2p-1}(t-t_{n})^{\frac{p}{2}-1}K^{p}\left[(t-t_{n})^{\frac{p}{2}}+\left(\frac{p(p-1)}{2}\right)^{\frac{p}{2}}\right]\int_{t_{n}}^{t}\sup_{0\le u\le s}\mathbb{E}|x(u)|^{p}ds
\end{eqnarray*}
It follows from \eqref{eq21} that
\begin{eqnarray*}
\mathbb{E}|x(t)-x(t_{n})|^{p}&\le&  2^{2p-1}(t-t_{n})^{\frac{p}{2}}K^{p}\left[(t-t_{n})^{\frac{p}{2}}+\left(\frac{p(p-1)}{2}\right)^{\frac{p}{2}}\right]e^{2pK[1+(p-1)K]t}| x_{0}|^{p}\nonumber\\
&\le& 2^{2p-1}h^{\frac{p}{2}}K^{p}\left[t^{\frac{p}{2}}+\left(\frac{p(p-1)}{2}\right)^{\frac{p}{2}}\right]e^{2pK[1+(p-1)K]t}| x_{0}|^{p}
\end{eqnarray*}
Hence,
\begin{equation*}
\sup_{0\le t\le T}\mathbb{E}|x(t)-x(t_{n})|^{p}\le 2^{2p-1}K^{p}\left[T^{\frac{p}{2}}+\left(\frac{p(p-1)}{2}\right)^{\frac{p}{2}}\right]e^{2pK[1+(p-1)K]T}h^{\frac{p}{2}}| x_{0}|^{p}.
\end{equation*}
The proof is complete.
\end{proof}

\begin{theorem}\label{Q6}
Assume that \Cref{assumption 1} holds. For a step size $h=\frac{\tau}{m}$, the EMSDEPCA \eqref{SDEPCAs-Euler-1} is $p$th moment exponentially stable, i.e. $\mathbb{E}|X_{n}|^{p}\le L_{1}e^{-\lambda_{1}nh}|x_{0}|^{p}$. If the step size $h$ satisfies  
\begin{equation}\label{eq81}
3^{p-1}H_{8}(2\hat{n}\tau, K, p)h^{\frac{p}{2}}+e^{-\frac{3}{4}\lambda_{1}\hat{n}\tau}\le e^{-\frac{1}{2}\lambda_{1}\hat{n}\tau},
\end{equation}
where $\hat{n}=\left[\frac{4\ln( 3^{p-1}L_{1})}{\lambda_{1}\tau}\right]+1$ and $H_{8}(2\hat{n}\tau, K, p)=H_{7}(2\hat{n}\tau, K, p)+H_{6}(2\hat{n}\tau, K, p)$, then the SDEPCA \eqref{SDEPCAs} is also $p$th moment exponentially stable, where $H_{6}(2\hat{n}\tau, K, p)$ is defined in \Cref{lem73} and $H_{7}(2\hat{n}\tau, K, p)$ in \Cref{lem22}.
\end{theorem}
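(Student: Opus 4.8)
The plan is to establish a single one-block contraction estimate for $\mathbb{E}|x(t)|^{p}$ and then iterate it using the flow and Markov structure of the SDEPCA and the EMSDEPCA, closing the argument on the first time window with the a priori bound of \Cref{lem21}; this is the scheme of \Cref{theorem 4}, except that the roles are reversed, $x(t)$ now being the target and the numerical process the ``known stable'' one. Fix $\hat n=\left[\frac{4\ln(3^{p-1}L_{1})}{\lambda_{1}\tau}\right]+1$ as in the statement. Since $p\ge2$ and (from \eqref{stability} at $n=0$) $L_{1}\ge1$, this $\hat n$ is a positive integer, so every $k\hat n\tau$ with $k\in\mathbb{N}$ is a multiple of $\tau$; and the choice gives $3^{p-1}L_{1}e^{-\lambda_{1}\hat n\tau}\le e^{-\frac34\lambda_{1}\hat n\tau}$.

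\emph{One-block estimate.} Fix $k\ge0$. By the flow property of the SDEPCA at multiples of $\tau$, on $[k\hat n\tau,\infty)$ the solution $x(t)$ coincides with the SDEPCA restarted at $k\hat n\tau$ from $x(k\hat n\tau)$; let $\{\bar{X}_{n}\}_{t_{n}\ge k\hat n\tau}$ be the EMSDEPCA \eqref{SDEPCAs-Euler-1} restarted at $t_{n}=k\hat n\tau$ from the same $\mathcal{F}_{k\hat n\tau}$-measurable value $x(k\hat n\tau)$ (for $k=0$ this is just $\{X_{n}\}$). For $t\in[(k+1)\hat n\tau,(k+2)\hat n\tau)$, taking $t_{n}$ with $t_{n}\le t<t_{n+1}$ one has $(k+1)\hat n\tau\le t_{n}$, $t_{n+1}\le(k+2)\hat n\tau$ and $nh-k\hat n\tau\ge\hat n\tau$. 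From $\mathbb{E}|x(t)|^{p}\le 3^{p-1}\mathbb{E}|x(t)-x(t_{n})|^{p}+3^{p-1}\mathbb{E}|x(t_{n})-\bar{X}_{n}|^{p}+3^{p-1}\mathbb{E}|\bar{X}_{n}|^{p}$ I would bound the first term by \Cref{lem22} and the second by \Cref{lem73} (both over a window of length $2\hat n\tau$, after conditioning on $\mathcal{F}_{k\hat n\tau}$ and using time-homogeneity, which turns the deterministic $|x_{0}|^{p}$ into $\mathbb{E}|x(k\hat n\tau)|^{p}$), and the third by the $p$th moment stability of the EMSDEPCA together with its time-homogeneous Markov property (cf. \eqref{X_markov}), giving $3^{p-1}L_{1}e^{-\lambda_{1}(nh-k\hat n\tau)}\mathbb{E}|x(k\hat n\tau)|^{p}\le e^{-\frac34\lambda_{1}\hat n\tau}\mathbb{E}|x(k\hat n\tau)|^{p}$. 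Using $H_{8}=H_{6}+H_{7}$ and hypothesis \eqref{eq81}, this yields
\begin{equation*}
\sup_{(k+1)\hat n\tau\le t<(k+2)\hat n\tau}\mathbb{E}|x(t)|^{p}\le\Big(3^{p-1}H_{8}(2\hat n\tau,K,p)h^{\frac{p}{2}}+e^{-\frac34\lambda_{1}\hat n\tau}\Big)\mathbb{E}|x(k\hat n\tau)|^{p}\le e^{-\frac12\lambda_{1}\hat n\tau}\,\mathbb{E}|x(k\hat n\tau)|^{p}.
\end{equation*}

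\emph{Iteration and conclusion.} Feeding $t=(k+1)\hat n\tau$ back into the left-hand side and inducting on $k$ gives $\mathbb{E}|x(k\hat n\tau)|^{p}\le e^{-\frac12k\lambda_{1}\hat n\tau}|x_{0}|^{p}$, hence $\sup_{(k+1)\hat n\tau\le t<(k+2)\hat n\tau}\mathbb{E}|x(t)|^{p}\le e^{-\frac12(k+1)\lambda_{1}\hat n\tau}|x_{0}|^{p}$ for all $k\ge0$. For $t\ge\hat n\tau$ this implies $\mathbb{E}|x(t)|^{p}\le e^{\frac12\lambda_{1}\hat n\tau}e^{-\frac12\lambda_{1}t}|x_{0}|^{p}$; for $0\le t\le\hat n\tau$, \Cref{lem21} gives $\mathbb{E}|x(t)|^{p}\le H_{1}(\hat n\tau,p,K)|x_{0}|^{p}\le H_{1}(\hat n\tau,p,K)e^{\frac12\lambda_{1}\hat n\tau}e^{-\frac12\lambda_{1}t}|x_{0}|^{p}$. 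Taking $M=H_{1}(\hat n\tau,p,K)e^{\frac12\lambda_{1}\hat n\tau}$ and $\gamma=\lambda_{1}/2$ then proves $\mathbb{E}|x(t)|^{p}\le M|x_{0}|^{p}e^{-\gamma t}$ for all $t\ge0$, i.e. the SDEPCA \eqref{SDEPCAs} is $p$th moment exponentially stable.

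The heavy It\^o/Young estimates are already packaged into \Cref{lem21}, \Cref{lem73} and \Cref{lem22}, so the substantive points are: that $k\hat n\tau$ being a multiple of $\tau$ makes the flow/Markov restarting legitimate for \emph{both} processes; that the convergence lemmas must be invoked over a window of length $2\hat n\tau$ rather than $\hat n\tau$, since $t$ sweeps the whole block while the guaranteed numerical decay over it is only $e^{-\lambda_{1}\hat n\tau}$; and the passage from the random-initial-data versions of \Cref{lem73}--\Cref{lem22} to their stated deterministic forms by conditioning. I expect aligning the time blocks with the hypotheses of those lemmas (and keeping the constants $H_{6},H_{7}$ evaluated at argument $2\hat n\tau$) to be the only delicate bookkeeping; the inequalities themselves are direct substitutions.
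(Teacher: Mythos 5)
Your proposal is correct and follows essentially the same route as the paper: the $3^{p-1}$-term decomposition into $|x(t)-x(t_{n})|$, $|x(t_{n})-\bar{X}_{n}|$ and $|\bar{X}_{n}|$, the use of \Cref{lem22}, \Cref{lem73} (over windows of length $2\hat{n}\tau$) and \eqref{X_markov} to get the one-block contraction $e^{-\frac12\lambda_{1}\hat{n}\tau}$, the restart of the EMSDEPCA from $x(k\hat{n}\tau)$ at each block, and \Cref{lem21} for the initial window. You merely state the uniform-in-$k$ block estimate up front (and make explicit the conditioning and the fact that $k\hat{n}\tau$ is a multiple of $\tau$, which the paper leaves implicit), where the paper works out $k=0,1$ and then says ``repeating this procedure.''
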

\begin{proof}
For any $t\ge 0$, there exist $n\in\mathbb{N}$ such that $t_{n}\le t<t_{n+1}$,
\begin{eqnarray*}
\mathbb{E}|x(t)|^{p}\le 3^{p-1}\mathbb{E}|x(t)-x(t_{n})|^{p}+3^{p-1}\mathbb{E}|x(t_{n})-X_{n}|^{p}+3^{p-1}\mathbb{E}|X_{n}|^{p}
\end{eqnarray*}
According to \Cref{lem73}, we have
\begin{equation*}
\sup_{0\le t_{n}\le 2\hat{n}\tau}\mathbb{E}|x(t_{n})-X_{n}|^{p}\le H_{6}(2\hat{n}\tau, K, p)h^{\frac{p}{2}}|x_{0}|^{p}.
\end{equation*}
By \Cref{lem22}, we have
\begin{equation*}
\sup_{0\le t\le 2\hat{n}\tau}\mathbb{E}|x(t)-x(t_{n})|^{p}\le H_{7}(2\hat{n}\tau, K, p)h^{\frac{p}{2}}|x_{0}|^{p}.
\end{equation*}
Since $\hat{n}=\left[\frac{4\ln (3^{p-1}L_{1})}{\lambda_{1}\tau}\right]+1$, we have $3^{p-1}L_{1}e^{-\lambda_{1}\hat{n}\tau}\le e^{-\frac{3}{4}\lambda_{1}\hat{n}\tau}$, Therefore,
\begin{eqnarray*}
&&\sup_{\hat{n}\tau\le t\le 2\hat{n}\tau}\mathbb{E}|x(t)|^{p}\nonumber\\
&\le& 3^{p-1}\sup_{0\le t\le 2\hat{n}\tau}\mathbb{E}|x(t)-x(t_{n})|^{p}+3^{p-1}\sup_{0\le t_{n}\le 2\hat{n}\tau}\mathbb{E}|x(t_{n})-X_{n}|^{p}+3^{p-1}\sup_{\hat{n}\tau\le t_{n}\le 2\hat{n}\tau}\mathbb{E}|X_{n}|^{p}\nonumber\\
&\le& \left(3^{p-1}H_{7}(2\hat{n}\tau, K, p)h^{\frac{p}{2}}+3^{p-1}H_{6}(2\hat{n}\tau, K, p)h^{\frac{p}{2}}+3^{p-1}L_{1}e^{-\lambda_{1}\hat{n}\tau}\right)|x_{0}|^{p}\nonumber\\
&\le& \left(3^{p-1}H_{8}(2\hat{n}\tau, K, p)h^{\frac{p}{2}}+e^{-\frac{3}{4}\lambda_{1}\hat{n}\tau}\right)|x_{0}|^{p}
\end{eqnarray*}
where $H_{8}(2\hat{n}\tau, K, p)=H_{7}(2\hat{n}\tau, K, p)+H_{6}(2\hat{n}\tau, K, p)$. Recalling \eqref{eq81}, we have
\begin{eqnarray}\label{eq87}
\sup_{\hat{n}\tau\le t\le 2\hat{n}\tau}\mathbb{E}|x(t)|^{p}\le e^{-\frac{1}{2}\lambda_{1}\hat{n}\tau}|x_{0}|^{p}.
\end{eqnarray}
Denote by $\{\bar{X}_{n}\}_{nh\ge \hat{n}\tau}$ the numerical solution of \eqref{SDEPCAs-Euler-1} with initial data $\bar{X}_{\hat{n}m}=x(\hat{n}\tau)$ at $t=\hat{n}\tau$. Then from \eqref{X_markov}, we have
\begin{equation*}
\mathbb{E}|\bar{X}_{n}|^{p}\le L_{1}e^{-\lambda_{1}(n-\hat{n}m)h}\mathbb{E}|x(\hat{n}\tau)|^{p}.
\end{equation*}
Using \Cref{lem73} and \Cref{lem22}, we get
\begin{equation*}
\sup_{\hat{n}\tau\le t_{n}\le 3\hat{n}\tau}\mathbb{E}|x(t_{n})-\bar{X}_{n}|^{p}\le H_{6}(2\hat{n}\tau, K, p)h^{\frac{p}{2}}\mathbb{E}|x(\hat{n}\tau)|^{p}.
\end{equation*}
\begin{equation*}
\sup_{\hat{n}\tau\le t\le 3\hat{n}\tau}\mathbb{E}|x(t)-x(t_{n})|^{p}\le H_{7}(2\hat{n}\tau, K, p)h^{\frac{p}{2}}\mathbb{E}|x(\hat{n}\tau)|^{p}.
\end{equation*}
Therefore, 
\begin{eqnarray*}
\sup_{2\hat{n}\tau\le t\le 3\hat{n}\tau}\mathbb{E}|x(t)|^{p}&\le& \left(3^{p-1}H_{7}(2\hat{n}\tau, K, p)h^{\frac{p}{2}}+3^{p-1}H_{6}(2\hat{n}\tau, K, p)h^{\frac{p}{2}}+L_{1}e^{-\lambda_{1}\hat{n}\tau}\right)\mathbb{E}|x(\hat{n}\tau)|^{p}\nonumber\\
&\le& \left(3^{p-1}H_{8}(2\hat{n}\tau, K, p)h^{\frac{p}{2}}+e^{-\frac{3}{4}\lambda_{1}\hat{n}\tau}\right)\mathbb{E}|x(\hat{n}\tau)|^{p}\nonumber\\
&\le& e^{-\frac{1}{2}\lambda_{1}\hat{n}\tau}\sup_{\hat{n}\tau\le t\le 2\hat{n}\tau}\mathbb{E}|x(t)|^{p}
\end{eqnarray*}
By \eqref{eq87}, we obtain
\begin{equation*}
\sup_{2\hat{n}\tau\le t\le 3\hat{n}\tau}\mathbb{E}|x(t)|^{p}\le e^{-\frac{\lambda_{1}}{2}2\hat{n}\tau}|x_{0}|^{p}.
\end{equation*}
Repeating this procedure, we find for $i=1, 2, \cdots$, 
\begin{eqnarray}\label{eq87_1}
\sup_{i\hat{n}\tau\le t\le (i+1)\hat{n}\tau}\mathbb{E}|x(t)|^{p}\le e^{-\frac{\lambda_{1}}{2} i\hat{n}\tau}| x_{0}|^{p}\le \bar{M}_{1}e^{-\frac{\lambda_{1}}{2}t}|x_{0}|^{p},
\end{eqnarray}
where $\bar{M}_{1}=e^{\frac{\lambda_{1}}{2}\hat{n}\tau}$.  On the other hand, by means of \Cref{lem21}, we can show that 
\begin{eqnarray*}
\sup_{0\le t\le \hat{n}\tau}\mathbb{E}|x(t)|^{p}\le H_{1}(\hat{n}\tau, p, K)|x_{0}|^{p}\le M_{1}|x_{0}|^{p}e^{-\frac{\lambda_{1}}{2}t},
\end{eqnarray*}
where $M_{1}=H_{1}(\hat{n}\tau, p, K)e^{\frac{\lambda_{1}}{2}\hat{n}\tau}>e^{\frac{\lambda_{1}}{2}\hat{n}\tau}=\bar{M}_{1}$, this, together with \eqref{eq87_1}, we arrive at for any $t\ge 0$,
\begin{equation*}
\mathbb{E}|x(t)|^{p}\le M_{1}|x_{0}|^{p}e^{-\frac{1}{2}\lambda_{1}t}.
\end{equation*}
This completes the proof.
\end{proof}

\section{EMSDE \eqref{SDEs-Euler-1} shares the stability with SDE \eqref{SDEs}}\label{sec 6}
\cite{Mao2003LMS} gives the positive answer to (Q2) only for the case $p=2$. In this section, we shall show that for $p>2$, if the SDE \eqref{SDEs} is $p$th moment exponentially stable, then the EMSDE \eqref{SDEs-Euler-1} is also $p$th moment exponentially stable with some restriction on $h$, i.e. give the positive answer to (Q2). The first lemma shows that the EMSDE \eqref{SDEs-Euler-1} is convergent in the $p$th moment to SDE \eqref{SDEs}.
\begin{lemma}\label{lemA1}
Assume that \Cref{assumption 1} holds. For any $T>0$,
\begin{equation*}
\sup_{0\le t_{n}\le T}\mathbb{E}|y(t_{n})-Y_{n}|^{p}\le H_{9}(T, K, p)h^{\frac{p}{2}}|x_{0}|^{p},
\end{equation*}
where $H_{9}(T, K, p)$ is defined as \eqref{H9}.
\end{lemma}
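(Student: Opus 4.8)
The plan is to follow closely the proof of \Cref{lem73}, working with the continuous-time EM approximation $y_{\Delta}$ from \eqref{eq_y_delta}, which satisfies $y_{\Delta}(t_{n})=\bar{y}_{\Delta}(t_{n})=Y_{n}$; once $\sup_{0\le t\le T}\mathbb{E}|y(t)-y_{\Delta}(t)|^{p}$ is controlled, restricting to $t=t_{n}$ gives the claim. In fact the SDE case is slightly cleaner than \Cref{lem73}, since in \eqref{eq_y_delta} all of $f,g,u_{1},u_{2}$ are evaluated at $\bar{y}_{\Delta}(s)$ (no piecewise-continuous argument), so the only ``one-step'' discrepancy that enters is $y_{\Delta}(s)-\bar{y}_{\Delta}(s)$.

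First I would record the a priori moment bound $\sup_{0\le t\le T}\mathbb{E}|y_{\Delta}(t)|^{p}\le e^{2pK[1+(p-1)K]T}|x_{0}|^{p}$, which follows from exactly the It\^{o}--Gronwall computation used for \eqref{eq27} in \Cref{lem23}, because $f+u_{1}$ and $g+u_{2}$ obey the same linear growth bound $2K|\cdot|$. Next I would apply the It\^{o} formula to $|y(t)-y_{\Delta}(t)|^{p}$, bound the drift and diffusion integrands via \Cref{assumption 1} by multiples of $|y(s)-y_{\Delta}(s)|^{p-1}|y(s)-\bar{y}_{\Delta}(s)|$ and $|y(s)-y_{\Delta}(s)|^{p-2}|y(s)-\bar{y}_{\Delta}(s)|^{2}$, split $|y(s)-\bar{y}_{\Delta}(s)|\le|y(s)-y_{\Delta}(s)|+|y_{\Delta}(s)-\bar{y}_{\Delta}(s)|$, and apply Young's inequality (in the forms $a^{p-1}b\le\frac{p-1}{p}a^{p}+\frac{1}{p}b^{p}$ and $a^{p-2}b^{2}\le\frac{p-2}{p}a^{p}+\frac{2}{p}b^{p}$). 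This yields
\begin{equation*}
\mathbb{E}|y(t)-y_{\Delta}(t)|^{p}\le A\int_{0}^{t}\sup_{0\le u\le s}\mathbb{E}|y(u)-y_{\Delta}(u)|^{p}\,ds+B\int_{0}^{t}\mathbb{E}|y_{\Delta}(s)-\bar{y}_{\Delta}(s)|^{p}\,ds
\end{equation*}
with explicit $A=A(p,K)$ and $B=B(p,K)$.

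Then I would estimate the one-step error exactly as for \eqref{eq77}: for $s\in[t_{n},t_{n+1})$, $y_{\Delta}(s)-\bar{y}_{\Delta}(s)=(s-t_{n})(f(Y_{n})+u_{1}(Y_{n}))+(g(Y_{n})+u_{2}(Y_{n}))(w(s)-w(t_{n}))$, so the basic inequality, $|f(Y_{n})+u_{1}(Y_{n})|\vee|g(Y_{n})+u_{2}(Y_{n})|\le 2K|Y_{n}|$, the conditional independence of $w(s)-w(t_{n})$ from $\mathcal{F}_{t_{n}}$ together with $|Av|\le|A||v|$, and $\mathbb{E}|w(s)-w(t_{n})|^{p}\le c_{p}(s-t_{n})^{p/2}$ give $\mathbb{E}|y_{\Delta}(s)-\bar{y}_{\Delta}(s)|^{p}\le 2^{2p}K^{p}h^{\frac{p}{2}}\mathbb{E}|Y_{n}|^{p}$, hence $\mathbb{E}|y_{\Delta}(s)-\bar{y}_{\Delta}(s)|^{p}\le 2^{2p}K^{p}h^{\frac{p}{2}}e^{2pK[1+(p-1)K]s}|x_{0}|^{p}$ by the a priori bound. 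Substituting into the displayed inequality, integrating the exponential, taking $\sup_{0\le t\le T}$, and invoking Gronwall's inequality gives $\sup_{0\le t\le T}\mathbb{E}|y(t)-y_{\Delta}(t)|^{p}\le H_{9}(T,K,p)h^{\frac{p}{2}}|x_{0}|^{p}$ with $H_{9}$ of the form $\mathrm{const}(p,K)\,e^{A(p,K)T}\,T$, and $y_{\Delta}(t_{n})=Y_{n}$ finishes the proof.

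I do not expect a real obstacle: this is a standard strong-convergence estimate under the global Lipschitz \Cref{assumption 1}. The only points requiring care are bookkeeping the constants through the repeated Young-inequality splittings (so that $A(p,K)$ matches the Gronwall constant that recurs elsewhere) and the matrix--vector diffusion term $(g(Y_{n})+u_{2}(Y_{n}))(w(s)-w(t_{n}))$, handled via $|Av|\le|A||v|$ in the trace norm plus conditional independence --- both routine.
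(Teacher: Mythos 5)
Your proposal is correct and follows essentially the same route as the paper: the continuous-time interpolant $y_{\Delta}$, the It\^{o}--Young--Gronwall estimate for $\mathbb{E}|y(t)-y_{\Delta}(t)|^{p}$ with the split $|y(s)-\bar{y}_{\Delta}(s)|\le|y(s)-y_{\Delta}(s)|+|y_{\Delta}(s)-\bar{y}_{\Delta}(s)|$, the one-step bound $\mathbb{E}|y_{\Delta}(s)-\bar{y}_{\Delta}(s)|^{p}\le 2^{2p}K^{p}h^{p/2}\mathbb{E}|Y_{n}|^{p}$ combined with the a priori moment bound \eqref{eqA27}, and restriction to $t=t_{n}$. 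The only differences are immaterial bookkeeping of the Young-inequality constants entering $H_{9}$.
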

\begin{proof}
For any $t\ge 0$, by It\^{o} formula, \Cref{assumption 1} and Young inequality, we obtain
\begin{eqnarray}\label{eqA107}
&&\mathbb{E}|y(t)-y_{\Delta}(t)|^{p}\nonumber\\
&\le&\mathbb{E}\int_{0}^{t}p|y(s)-y_{\Delta}(s)|^{p-1}\left|f(y(s))-f(\bar{y}_{\Delta}(s))+u_{1}(y(s))-u_{1}(\bar{y}_{\Delta}(s))\right|\nonumber\\
&&+\frac{p(p-1)}{2}|y(s)-y_{\Delta}(s)|^{p-2}|g(y(s))-g(\bar{y}_{\Delta}(s))+u_{2}(y(s))-u_{2}(\bar{y}_{\Delta}(s))|^{2}ds\nonumber\\
&\le& \mathbb{E}\int_{0}^{t}2pK|y(s)-y_{\Delta}(s)|^{p-1}|y(s)-\bar{y}_{\Delta}(s)|\nonumber\\
&&+2p(p-1)K^{2}|y(s)-y_{\Delta}(s)|^{p-2}|y(s)-\bar{y}_{\Delta}(s)|^{2}ds\nonumber\\
&\le& \left(2K(2p-1)+8(p-1)^{2}K^{2}\right)\int_{0}^{t}\mathbb{E}|y(s)-y_{\Delta}(s)|^{p}ds\nonumber\\
&&+\left(2K+8(p-1)K^{2}\right)\int_{0}^{t}\mathbb{E}|y_{\Delta}(s)-\bar{y}_{\Delta}(s)|^{p}ds
\end{eqnarray}
Now, we shall give the estimation of the second term of the right hand. For any $t>0$, there exists $n$ such that $t_{n}\le t< t_{n+1}$, and $\bar{y}_{\Delta}(t)=Y_{n}=y_{\Delta}(t_{n})$. Hence from \eqref{SDEs-Euler-1} we have
\begin{eqnarray*}
&&\mathbb{E}|y_{\Delta}(t)-\bar{y}_{\Delta}(t)|^{p}\nonumber\\
&=&\mathbb{E}\left|(t-t_{n})\left(f(Y_{n})+u_{1}(Y_{n})\right)+\left(g(Y_{n})+u_{2}(Y_{n})\right)\left(W(t)-W(t_{n})\right)\right|^{p}\nonumber\\
&\le& 2^{p-1}\left(\mathbb{E}\left|(t-t_{n})\left(f(Y_{n})+u_{1}(Y_{n})\right)\right|^{p}+\mathbb{E}\left|\left(g(Y_{n})+u_{2}(Y_{n})\right)\left(W(t)-W(t_{n})\right)\right|^{p}\right)\nonumber\\
&\le& 2^{2p}K^{p}\mathbb{E}|Y_{n}|^{p}h^{\frac{p}{2}}
\end{eqnarray*}
Similarly as the proof of \Cref{lem21}, we have
\begin{eqnarray}\label{eqA27}
\sup_{0\le s\le t}\mathbb{E}|y_{\Delta}(s)|^{p}\le |x_{0}|^{p}e^{2pK\left(1+(p-1)K\right)t}
\end{eqnarray}
Applying \eqref{eqA27}, we obtain
\begin{eqnarray}\label{eqA112}
\mathbb{E}|y_{\Delta}(t)-\bar{y}_{\Delta}(t)|^{p}&\le& 2^{2p}K^{p}e^{2pK\left(1+(p-1)K\right)t}h^{\frac{p}{2}}|x_{0}|^{p}
\end{eqnarray}
Substituting \eqref{eqA112} into \eqref{eqA107}, we obtain
\begin{eqnarray*}
\mathbb{E}|y(t)-y_{\Delta}(t)|^{p}&\le&\left(2K(2p-1)+8(p-1)^{2}K^{2}\right)\int_{0}^{t}\mathbb{E}|y(s)-y_{\Delta}(s)|^{p}ds\nonumber\\
&&+\left(2K+8(p-1)K^{2}\right)\int_{0}^{t}2^{2p}K^{p}e^{2pK\left(1+(p-1)K\right)s}h^{\frac{p}{2}}|x_{0}|^{p}ds
\end{eqnarray*}
By Gronwall inequality, we have
\begin{equation*}
\sup_{0\le t\le T}\mathbb{E}|y(t)-y_{\Delta}(t)|^{p}\le H_{9}(T, p, K)h^{\frac{p}{2}}|x_{0}|^{p},
\end{equation*}
where 
\begin{equation}\label{H9}
H_{9}(T, p, K)=[1+4(p-1)K]2^{2p+1}K^{p+1}e^{2KT(3p-1+(p-1)(5p-4)K)}T.
\end{equation}
By noting $y_{\Delta}(t_{n})=Y_{n}$, we get for $t=t_{n}$
\begin{equation*}
\sup_{0\le t_{n}\le T}\mathbb{E}|y(t_{n})-Y_{n}|^{p}\le H_{9}(T, p, K)h^{\frac{p}{2}}|x_{0}|^{p}
\end{equation*}
The proof is completed.
\end{proof}

\begin{theorem}\label{theorem 6}
Let \Cref{assumption 1} hold. Assume that the SDE \eqref{SDEs} is $p$th moment exponentially stable and satisfies \eqref{exp-sta-y}. Let $T=1+4\ln(2^{p-1}M_{2})/\gamma_{2}$. 
If $h$ satisfies 
\begin{equation}\label{A13}
2^{p-1}H_{9}(2T, p, K)h^{\frac{p}{2}}+e^{-\frac{3}{4}\gamma_{2}T}\le e^{-\frac{1}{2}\gamma_{2}T}.
\end{equation}
 Then the EMSDE \eqref{SDEs-Euler-1} is $p$th moment exponentially stable.
\end{theorem}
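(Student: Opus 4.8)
The plan is to repeat, with the roles of exact and numerical solutions interchanged, the restart-and-Markov scheme used for \Cref{theorem 3}, \Cref{theorem 4} and especially \Cref{Q6}: the continuous SDE \eqref{SDEs} is now the stable object, and we transfer its exponential decay to the Euler--Maruyama iterates $Y_n$ over one window of length $\approx T$, then propagate it by induction. The whole engine is the strong convergence estimate \Cref{lemA1}, so there is essentially no new analytic content; the real work is in lining the constants up with the threshold \eqref{A13}, and that is where I expect the referee to look.

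\textbf{Step 1 (one window).} Fix an integer $\hat n$ with $\hat n h$ close to $T$ — for instance $\hat n=\lceil T/h\rceil$, which is legitimate once $h\le T$, a restriction forced by \eqref{A13} for small $h$ — so that $\hat n h\ge T$ and $2\hat n h\le 2T$ after $h$ is taken small; the choice $T=1+4\ln(2^{p-1}M_2)/\gamma_2$ then yields $2^{p-1}M_2 e^{-\gamma_2\hat n h}\le e^{-\frac34\gamma_2 T}$. Writing $|Y_n|^p\le 2^{p-1}|Y_n-y(t_n)|^p+2^{p-1}|y(t_n)|^p$, bounding the first term by \Cref{lemA1} and the second by \eqref{exp-sta-y}, I obtain
\begin{equation*}
\sup_{\hat n h\le t_n\le 2\hat n h}\mathbb{E}|Y_n|^p\le\left(2^{p-1}H_9(2\hat n h,p,K)h^{\frac{p}{2}}+2^{p-1}M_2 e^{-\gamma_2\hat n h}\right)|x_0|^p\le\left(2^{p-1}H_9(2T,p,K)h^{\frac{p}{2}}+e^{-\frac34\gamma_2 T}\right)|x_0|^p,
\end{equation*}
which by \eqref{A13} is $\le e^{-\frac12\gamma_2 T}|x_0|^p$.

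\textbf{Step 2 (induction).} For $i\ge 1$ let $\bar y$ be the solution of \eqref{SDEs} started at time $i\hat n h$ from the $\mathcal F_{i\hat n h}$-measurable datum $Y_{i\hat n}$. By the flow and time-homogeneous Markov property of \eqref{SDEs} recorded in \cref{sec 2}, $\mathbb{E}|\bar y(t)|^p\le M_2 e^{-\gamma_2(t-i\hat n h)}\mathbb{E}|Y_{i\hat n}|^p$; by the corresponding Markov property of the Euler scheme, \Cref{lemA1} applies conditionally and gives $\sup_{i\hat n h\le t_n\le(i+2)\hat n h}\mathbb{E}|Y_n-\bar y(t_n)|^p\le H_9(2\hat n h,p,K)h^{\frac{p}{2}}\mathbb{E}|Y_{i\hat n}|^p$. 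Repeating the Step 1 estimate with $|x_0|^p$ replaced by $\mathbb{E}|Y_{i\hat n}|^p$ yields $\sup_{(i+1)\hat n h\le t_n\le(i+2)\hat n h}\mathbb{E}|Y_n|^p\le e^{-\frac12\gamma_2 T}\sup_{i\hat n h\le t_n\le(i+1)\hat n h}\mathbb{E}|Y_n|^p$, hence $\sup_{i\hat n h\le t_n\le(i+1)\hat n h}\mathbb{E}|Y_n|^p\le e^{-\frac{i}{2}\gamma_2 T}|x_0|^p$ for all $i\ge 1$. For $i=0$ the a priori bound \eqref{eqA27} at grid points gives $\sup_{0\le t_n\le\hat n h}\mathbb{E}|Y_n|^p\le e^{2pK(1+(p-1)K)\hat n h}|x_0|^p$. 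Setting $\lambda_2:=\gamma_2 T/(2\hat n h)>0$ and absorbing the finitely many transient factors into a single constant $L_2$ (e.g.\ $L_2=e^{\lambda_2\hat n h}(e^{2pK(1+(p-1)K)\hat n h}\vee 1)$), these window bounds combine into $\mathbb{E}|Y_n|^p\le L_2|x_0|^p e^{-\lambda_2 nh}$ for every $n\in\mathbb{N}$, which is the assertion.

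The only points needing genuine care are (i) the choice of $\hat n$ so that, once $h$ is small, $H_9$ is evaluated at an argument no larger than $2T$ while $2^{p-1}M_2 e^{-\gamma_2\hat n h}\le e^{-\frac34\gamma_2 T}$, making the output of Step 1 line up exactly with the hypothesis \eqref{A13}; and (ii) the conditional use of \Cref{lemA1} and of \eqref{exp-sta-y} when the restarted processes are launched from the random variable $Y_{i\hat n}$, which is justified precisely by the time-homogeneous Markov/flow property of the SDE and of the Euler scheme spelled out in \cref{sec 2}. Everything else is the same Gronwall-type bookkeeping already carried out in \Cref{lemA1} and in the proof of \Cref{Q6}.
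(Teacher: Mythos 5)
Your proposal is correct and follows essentially the same route as the paper: decompose the numerical iterate against a restarted exact solution of \eqref{SDEs}, apply the strong-convergence estimate of \Cref{lemA1} conditionally via the Markov/flow property together with \eqref{exp-sta-y} and the threshold \eqref{A13} to obtain a contraction over each window of length $\approx T$, and iterate, handling the initial window with the a priori moment bound \eqref{eqA27}. The only differences are cosmetic: you align the windows with the grid via $\hat n=\lceil T/h\rceil$ (note $2\hat n h\in[2T,2T+2h)$, not $\le 2T$, a harmless constant mismatch with $H_{9}(2T,p,K)$ in \eqref{A13}), whereas the paper restarts at the exact times $iT$ and works with the continuous interpolant $y_{\Delta}(t)$.
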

\begin{proof}
Since $T=1+4\ln(2^{p-1}M_{2})/\gamma_{2}$, we have
\begin{equation*}
2^{p-1}M_{2}e^{-\gamma_{2}T}< e^{-\frac{3}{4}\gamma_{2}T}.
\end{equation*}
Now, for any given $i\in\mathbb{N}$, let $\{\hat{y}(t)\}_{t\ge iT}$ be the solution to the SDE \eqref{SDEs} for $t\in[iT, \infty)$, with the initial condition $y_{\Delta}(iT)$. Then using basic inequality, Lemma \ref{lemA1}, \eqref{exp-sta-y} and \eqref{A13}, we have
\begin{eqnarray}\label{eqA17}
&&\sup_{(i+1)T\le t\le (i+2)T}\mathbb{E}|y_{\Delta}(t)|^{p}\nonumber\\
&\le& 2^{p-1}\sup_{iT\le t\le (i+2)T}\mathbb{E}|y_{\Delta}(t)-\hat{y}(t)|^{p}+2^{p-1}\sup_{(i+1)T\le t\le (i+2)T}\mathbb{E}|\hat{y}(t)|^{p}\nonumber\\
&\le& \left(2^{p-1}H_{9}(2T, p, K)h^{\frac{p}{2}}+2^{p-1}M_{2}e^{-\gamma_{2}T}\right)\mathbb{E}|y_{\Delta}(iT)|^{p}\nonumber\\
&\le& \left(2^{p-1}H_{9}(2T, p, K)h^{\frac{p}{2}}+e^{-\frac{3}{4}\gamma_{2}T}\right)\mathbb{E}|y_{\Delta}(iT)|^{p}\nonumber\\
&\le& e^{-\frac{1}{2}\gamma_{2}T}\sup_{iT\le t\le (i+1)T}\mathbb{E}|y_{\Delta}(t)|^{p}.
\end{eqnarray}
According to \eqref{eqA27},
\begin{eqnarray}\label{eqA15}
\sup_{0\le t\le T}\mathbb{E}|y_{\Delta}(t)|^{p}\le |x_{0}|^{p}e^{2pK\left(1+(p-1)K\right)T}\le L_{2}e^{-\frac{1}{2}\gamma_{2}t}|x_{0}|^{p},
\end{eqnarray}
where $L_{2}=e^{\frac{1}{2}\gamma_{2}T+2pK\left(1+(p-1)K\right)T}$. Combining \eqref{eqA15} and \eqref{eqA17}, we obtain that
\begin{eqnarray}\label{eqA28}
\sup_{(i+1)T\le t \le(i+2)T}\mathbb{E}|y_{\Delta}(t)|^{p}&\le& e^{-\frac{1}{2}(i+1)\gamma_{2}T}\sup_{0\le t\le T}\mathbb{E}|y_{\Delta}(t)|^{p}\nonumber\\
&\le& e^{-\frac{1}{2}(i+1)\gamma_{2}T}|x_{0}|^{p}e^{2pK\left(1+(p-1)K\right)T}\nonumber\\
&\le& L_{2}e^{-\frac{1}{2}\gamma_{2}t}|x_{0}|^{p}.
\end{eqnarray}
 Due to \eqref{eqA28} and \eqref{eqA15}, the proof is completed by using $t=t_{n}$.
\end{proof}

\section{Conclusion}
In this paper, we have shown from \Cref{theorem 3}, \Cref{theorem 4}, \Cref{Q6} and \Cref{theorem 6} that, under the standing \Cref{assumption 1},
\begin{eqnarray*}
SDE \eqref{SDEs}\stackrel{Q2}{\to}\  EMSDE \eqref{SDEs-Euler-1}\stackrel{Q3}{\to}\  EMSDEPCA \eqref{SDEPCAs-Euler-1}\stackrel{Q4}{\to}SDEPCA \eqref{SDEPCAs}\stackrel{Q1}{\to}SDE \eqref{SDEs}.
\end{eqnarray*}
 Hence we have the following theorem.
\begin{theorem}\label{thm61}
Under \Cref{assumption 1}, if one of SDEPCA \eqref{SDEPCAs}, SDE \eqref{SDEs}, EMSDEPCA \eqref{SDEPCAs-Euler-1} and EMSDE \eqref{SDEs-Euler-1} is $p$th moment exponentially stable, then the other three are also $p$th moment exponentially stable for sufficiently small step size $h$ and $\tau$.
\end{theorem}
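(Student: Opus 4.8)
The plan is to obtain \Cref{thm61} directly from the cycle of implications displayed just above, in which \Cref{theorem 6} (answering (Q2)) establishes the implication from SDE~\eqref{SDEs} to EMSDE~\eqref{SDEs-Euler-1}, \Cref{theorem 4} (Q3) the implication from EMSDE~\eqref{SDEs-Euler-1} to EMSDEPCA~\eqref{SDEPCAs-Euler-1}, \Cref{Q6} (Q4) the implication from EMSDEPCA~\eqref{SDEPCAs-Euler-1} to SDEPCA~\eqref{SDEPCAs}, and \Cref{theorem 3} (Q1) the implication from SDEPCA~\eqref{SDEPCAs} to SDE~\eqref{SDEs}. Since this cycle visits every one of the four objects, whichever one is assumed $p$th moment exponentially stable I would start the cycle at that node and apply the three implications that follow it; after these three steps all four objects are $p$th moment exponentially stable, and the four cases of the theorem differ only by the choice of starting node. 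Thus the only point that still requires argument is that the three smallness hypotheses met along such a path can be satisfied simultaneously by one admissible pair $(h,\tau)$ with $\tau=mh$.

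To that end I would fix once and for all a value $\delta\in(0,1)$, say $\delta=1/2$, for use in \Cref{theorem 3} and \Cref{theorem 4}, and then traverse the chosen path, recording at each step both its quantitative smallness requirement and the stability constants it hands on. The four conditions \eqref{A13}, \eqref{H4tau}, \eqref{eq81} and \eqref{Q1tau} have the same shape: the only contribution on the left that is not manifestly small is a factor $\tau^{p/2}$, or $h^{p/2}\le\tau^{p/2}$, multiplied by a coefficient built from $C_1,C_2,C_3,H_6,\dots,H_9$ and, in \eqref{H4tau} and \eqref{eq81}, from the products $\hat{n}\tau$, while the remaining terms lie strictly below the target threshold ($\delta<1$, or $e^{-\frac{1}{2}\lambda_1\hat{n}\tau}$, or $e^{-\frac{1}{2}\gamma_2T}$) with a fixed positive gap. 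Alongside this, one checks from the proofs of \Cref{theorem 6}, \Cref{theorem 4}, \Cref{Q6} and \Cref{theorem 3} that the constants passed from one step to the next stay under control as $\tau\to0$: \Cref{theorem 6} outputs constants independent of $h$, while \Cref{theorem 4}, \Cref{Q6} and \Cref{theorem 3} output bounded multiplicative constants and decay exponents bounded away from $0$ (for instance the rate $\lambda_1$ in \Cref{theorem 4} may be chosen as $-\ln R(\tau)/(2\hat{n}\tau)$, which tends to a strictly positive limit). Hence every $\hat{n}\tau$ occurring along the path stays in a bounded range, the coefficients of the $\tau^{p/2}$-terms stay bounded, and each of the three conditions on the path holds as soon as $\tau$ is below a strictly positive threshold depending only on $K$, $p$ and the stability data entering that step.

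Since only three of the four arrows are used, this produces finitely many positive $\tau$-thresholds; choosing $\tau$ below their minimum $\bar{\tau}$ and $h=\tau/m\le\tau<\bar{\tau}$ makes all three implications apply, which proves \Cref{thm61}. I expect the genuine difficulty to lie precisely in this bookkeeping---not in any individual estimate, but in verifying that composing three of the four implications does not drive the successive smallness thresholds to $0$. Once the tame behaviour as $\tau\to0$ of the constants $H_1,H_3,\dots,H_9$, $C_1,C_2,C_3$ and of the indices $\hat{n}$ is recorded, which is routine given their explicit formulas, the cyclic diagram delivers the theorem at once.
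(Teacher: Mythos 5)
Your proposal takes exactly the route the paper does: the paper obtains \Cref{thm61} purely from the cycle SDE \eqref{SDEs} $\to$ EMSDE \eqref{SDEs-Euler-1} $\to$ EMSDEPCA \eqref{SDEPCAs-Euler-1} $\to$ SDEPCA \eqref{SDEPCAs} $\to$ SDE \eqref{SDEs} furnished by \Cref{theorem 6}, \Cref{theorem 4}, \Cref{Q6} and \Cref{theorem 3}, starting at whichever node is assumed stable and applying the three subsequent implications. Your additional bookkeeping---verifying that the three smallness thresholds encountered along the path remain bounded away from zero because the constants $(L_i,\lambda_i,M_i,\gamma_i)$ handed from one step to the next stay controlled as $\tau\to 0$ and $h\le\tau$---is correct and supplies detail that the paper leaves entirely implicit.
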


By examming the proof of the \Cref{theorem 3}, \Cref{theorem 4}, \Cref{Q6} and \Cref{theorem 6}, we see that the $p$th moment exponential stability of  SDEPCA \eqref{SDEPCAs}, SDE \eqref{SDEs}, EMSDEPCA \eqref{SDEPCAs-Euler-1} and EMSDE \eqref{SDEs-Euler-1} are equivalent as long as their solutions are $p$th moment bounded and arbitrarily close for sufficiently small $\tau$ and $h$. 
Let 
\begin{eqnarray*}
F(y(t))=f(y(t))+u_{1}(y(t))\quad and\quad G(y(t))=g(y(t))+u_{2}(y(t))
\end{eqnarray*}
For $V\in C^{2,1}(\mathbb{R}^{d}\times\mathbb{R}_{+}; \mathbb{R}_{+})$, we define an operator $\mathcal{L}V$ by
\begin{equation*}
\mathcal{L}V(y, t)=V_{t}(y, t)+V_{y}(y, t)F(y(t))+\frac{1}{2}trace\left[G^{T}(y)V_{yy}(y, t)G(y)\right].
\end{equation*}
The sufficient criterion for $p$th moment exponential stability via a Lyapunov function is given by Theorem 4.4 in \cite[][P130]{Mao2008}. Now we quote it here.
\begin{theorem}\label{thm62}
Assume that there is a function $V(y, t)\in C^{2, 1}(\mathbb{R}^{d}\times\mathbb{R}_{+}; \mathbb{R}_{+})$, and positive constants $c_{1}, c_{2}, c_{3}$ such that 
\begin{equation*}
c_{1}|y|^{p}\le V(y, t)\le c_{2}|y|^{p}\quad and \quad \mathcal{L}V(y, t)\le -c_{3}V(y, t)
\end{equation*}
for all $(y, t)\in\mathbb{R}^{d}\times\mathbb{R}_{+}$. Then for the SDE \eqref{SDEs}, we have 
\begin{equation*}
\mathbb{E}|y(t)|^{p}\le \frac{c_{2}}{c_{1}}|x_{0}|^{p}e^{-c_{3}t},
\end{equation*}
for all $x_{0}\in\mathbb{R}^{d}$. In other words, the SDE \eqref{SDEs} is $p$th moment exponentially stable.
\end{theorem}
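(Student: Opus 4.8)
The plan is to run the classical exponential-weight It\^{o} argument for Lyapunov-type stability criteria. First I would apply It\^{o}'s formula to the process $t\mapsto e^{c_{3}t}V(y(t),t)$ along the solution of the SDE \eqref{SDEs}, obtaining
\begin{equation*}
e^{c_{3}t}V(y(t),t)=V(x_{0},0)+\int_{0}^{t}e^{c_{3}s}\left(c_{3}V(y(s),s)+\mathcal{L}V(y(s),s)\right)ds+M(t),
\end{equation*}
where $M(t)=\int_{0}^{t}e^{c_{3}s}V_{y}(y(s),s)G(y(s))dw(s)$ is a continuous local martingale with $M(0)=0$. By the standing hypothesis $\mathcal{L}V(y,t)\le -c_{3}V(y,t)$, the integrand of the Lebesgue integral is nonpositive, so pathwise $e^{c_{3}t}V(y(t),t)\le V(x_{0},0)+M(t)$.

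Next I would remove the local martingale by localization. Set $\theta_{k}=\inf\{t\ge 0:|y(t)|\ge k\}$. Under \Cref{assumption 1} the SDE \eqref{SDEs} has a unique global, non-explosive solution with finite moments of every order, so $\theta_{k}\uparrow\infty$ almost surely; on $[0,\theta_{k}]$ the integrand of $M$ is bounded, hence $M(t\wedge\theta_{k})$ is a genuine martingale with zero mean. Taking expectations at time $t\wedge\theta_{k}$ gives
\begin{equation*}
\mathbb{E}\left[e^{c_{3}(t\wedge\theta_{k})}V(y(t\wedge\theta_{k}),t\wedge\theta_{k})\right]\le V(x_{0},0)\le c_{2}|x_{0}|^{p}.
\end{equation*}

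Finally I would pass to the limit $k\to\infty$. By path continuity of $y(\cdot)$ and of $V$, together with $\theta_{k}\to\infty$ a.s., the random variable inside the expectation converges a.s. to $e^{c_{3}t}V(y(t),t)$; Fatou's lemma and the lower bound $c_{1}|y|^{p}\le V(y,t)$ then yield
\begin{equation*}
c_{1}e^{c_{3}t}\mathbb{E}|y(t)|^{p}\le\mathbb{E}\left[e^{c_{3}t}V(y(t),t)\right]\le\liminf_{k\to\infty}\mathbb{E}\left[e^{c_{3}(t\wedge\theta_{k})}V(y(t\wedge\theta_{k}),t\wedge\theta_{k})\right]\le c_{2}|x_{0}|^{p},
\end{equation*}
and dividing by $c_{1}e^{c_{3}t}$ gives $\mathbb{E}|y(t)|^{p}\le(c_{2}/c_{1})|x_{0}|^{p}e^{-c_{3}t}$, i.e. the SDE \eqref{SDEs} is $p$th moment exponentially stable. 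I expect the only delicate point to be this localization/limit step: one must first know the solution is non-explosive with finite moments so that $\theta_{k}\uparrow\infty$ a.s., and then apply Fatou on the left while keeping the right-hand bound independent of $k$. Both are guaranteed by the global Lipschitz \Cref{assumption 1}, so beyond It\^{o}'s formula and the two hypothesized inequalities no further estimate is needed.
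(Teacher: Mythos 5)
Your argument is correct: the exponential-weight It\^{o} computation for $e^{c_{3}t}V(y(t),t)$, followed by localization, Fatou's lemma, and the two-sided bound $c_{1}|y|^{p}\le V\le c_{2}|y|^{p}$, delivers exactly the claimed estimate. The paper does not prove this statement at all --- it quotes it verbatim from Theorem 4.4 of \cite{Mao2008} --- and your proof is precisely the classical argument given in that reference, so there is nothing to reconcile.
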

For convenience, we impose the following hypothesis.
\begin{assumption}\label{asp63}
There exists a pair of positive constants $p$ and $\lambda$ such that
\begin{equation*}
|y|^{2}\left(2y^{T}F(y)+|G(y)|^{2}\right)-(2-p)|y^{T}G(y)|^{2}\le -\lambda|y|^{4},\quad \forall \ y\in\mathbb{R}^{d}.
\end{equation*}
\end{assumption}
Applying the \Cref{thm62} with $V(y, t)=|y|^{p}$, we easily obtain the following theorem \cite[see][]{LiXiaoyue2019}.
\begin{theorem}
Under \Cref{asp63}, the SDE \eqref{SDEs} is $p$th moment exponentially stable, i.e.
\begin{equation*}
\mathbb{E}|y(t)|^{p}\le |x_{0}|^{p}e^{-\frac{\lambda}{2}pt},\quad \ \forall \ t>0,
\end{equation*}
where $p$ and $\lambda$ are given in \Cref{asp63}.
\end{theorem}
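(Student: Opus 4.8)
The plan is to apply \Cref{thm62} with the Lyapunov function $V(y,t)=|y|^{p}$, for which the two-sided bound $c_{1}|y|^{p}\le V(y,t)\le c_{2}|y|^{p}$ holds trivially with $c_{1}=c_{2}=1$. First I would check that $V\in C^{2,1}(\mathbb{R}^{d}\times\mathbb{R}_{+};\mathbb{R}_{+})$; since $V$ does not depend on $t$, this amounts to verifying that $y\mapsto|y|^{p}$ is twice continuously differentiable, which is exactly where the standing assumption $p\ge2$ enters (the Hessian displayed below extends continuously to $y=0$).

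Next I would compute $V_{t}\equiv0$, $V_{y}(y,t)=p|y|^{p-2}y^{T}$ and $V_{yy}(y,t)=p|y|^{p-2}I+p(p-2)|y|^{p-4}yy^{T}$. Inserting these into the operator $\mathcal{L}$ and using the identities $trace\left[G^{T}(y)G(y)\right]=|G(y)|^{2}$ and $trace\left[G^{T}(y)yy^{T}G(y)\right]=|y^{T}G(y)|^{2}$, I obtain
\begin{equation*}
\mathcal{L}V(y,t)=\frac{p}{2}|y|^{p-4}\left[|y|^{2}\left(2y^{T}F(y)+|G(y)|^{2}\right)+(p-2)|y^{T}G(y)|^{2}\right].
\end{equation*}

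Since $-(2-p)=p-2$, \Cref{asp63} states precisely that the bracketed expression is at most $-\lambda|y|^{4}$, so $\mathcal{L}V(y,t)\le-\frac{\lambda p}{2}|y|^{p}=-\frac{\lambda p}{2}V(y,t)$ for every $(y,t)$, the point $y=0$ being trivial as all terms vanish there. Hence the hypotheses of \Cref{thm62} hold with $c_{1}=c_{2}=1$ and $c_{3}=\lambda p/2$, and that theorem directly gives $\mathbb{E}|y(t)|^{p}\le|x_{0}|^{p}e^{-\frac{\lambda p}{2}t}$ for all $t>0$. I do not expect a genuine obstacle here: the only steps needing a little attention are the $C^{2}$-regularity of $|y|^{p}$ at the origin (which uses $p\ge2$) and the correct evaluation of the trace term $trace\left[G^{T}yy^{T}G\right]=|y^{T}G|^{2}$.
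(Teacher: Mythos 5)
Your proposal is correct and follows exactly the route the paper intends: the paper simply states that the result follows by ``applying \Cref{thm62} with $V(y,t)=|y|^{p}$'', and your computation of $V_{y}$, $V_{yy}$, and the trace terms is precisely the verification that is left implicit there, yielding $\mathcal{L}V\le-\frac{\lambda p}{2}V$ and hence the claimed bound with $c_{1}=c_{2}=1$, $c_{3}=\lambda p/2$. Your remarks on the $C^{2}$-regularity of $|y|^{p}$ at the origin for $p\ge2$ and on $trace[G^{T}yy^{T}G]=|y^{T}G|^{2}$ are accurate and fill in details the paper omits.
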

In combination with \Cref{thm61}, the following theorem provides an interesting result.
\begin{theorem}
Assume that \Cref{assumption 1} and \Cref{asp63} hold, then SDE \eqref{SDEs} is $p$th moment exponentially stable and SDEPCA \eqref{SDEPCAs}, EMSDEPCA \eqref{SDEPCAs-Euler-1}, EMSDE \eqref{SDEs-Euler-1} are also $p$th moment exponentially stable as long as step size $h$ and $\tau$ are sufficiently small.
\end{theorem}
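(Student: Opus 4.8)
The plan is to obtain the statement as an essentially immediate corollary of the two results that precede it: the Lyapunov-type criterion \Cref{thm62} applied to $V(y,t)=|y|^p$, and the equivalence \Cref{thm61}. No fresh estimates are needed; the whole argument is a composition of facts already established, so I expect no genuine obstacle, only some bookkeeping.

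First I would record that \Cref{asp63} forces the SDE \eqref{SDEs} to be $p$th moment exponentially stable. This is exactly the theorem stated just above the target statement, and its proof goes through \Cref{thm62} with $V(y,t)=|y|^p$: the two-sided bound $c_1|y|^p\le V\le c_2|y|^p$ holds with $c_1=c_2=1$, and a direct computation of
\[
\mathcal L V(y,t)=p|y|^{p-2}y^{\mathrm T}F(y)+\frac p2|y|^{p-2}|G(y)|^2+\frac{p(p-2)}2|y|^{p-4}\bigl|y^{\mathrm T}G(y)\bigr|^2
\]
shows, after multiplying by $\frac2p|y|^{4-p}$ and invoking \Cref{asp63} (note $-(2-p)=p-2$), that $\mathcal L V(y,t)\le-\frac{\lambda p}2 V(y,t)$. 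Hence the hypotheses of \Cref{thm62} hold with $c_3=\lambda p/2$, giving $\mathbb E|y(t)|^p\le|x_0|^p e^{-\frac\lambda2 p t}$. The only delicate point in this step is that $|y|^p$ is not $C^2$ at the origin unless $p$ is an even integer; this is handled in the usual way (work with $(\varepsilon+|y|^2)^{p/2}$ and let $\varepsilon\downarrow0$), and is already implicit in the cited references.

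Second, with \eqref{SDEs} now known to be $p$th moment exponentially stable and \Cref{assumption 1} in force, I would simply quote \Cref{thm61}: since one of the four systems is $p$th moment exponentially stable, all four are, for $h$ and $\tau$ small enough. That finishes the proof. The one thing worth a sentence is the meaning of ``small enough'': tracing the implications \eqref{SDEs} $\to$ \eqref{SDEs-Euler-1} $\to$ \eqref{SDEPCAs-Euler-1} $\to$ \eqref{SDEPCAs} through \Cref{theorem 6}, \Cref{theorem 4} and \Cref{Q6}, one accumulates the explicit conditions \eqref{A13}, \eqref{H4tau} and \eqref{eq81}; because the relevant stability constants are now pinned down in terms of $\lambda$ and $p$ (beginning with $M_2=1$, $\gamma_2=\lambda p/2$), these thresholds are fully explicit. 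So the only real ``work'' is transcribing those inequalities, and I do not anticipate any difficulty: the point of the result is simply that a single checkable coercivity condition on $F$ and $G$ pins down exponential stability of all four objects at once.
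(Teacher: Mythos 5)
Your proposal is correct and follows exactly the route the paper intends: derive $p$th moment exponential stability of the SDE \eqref{SDEs} from \Cref{asp63} via \Cref{thm62} with $V(y,t)=|y|^p$ (your computation of $\mathcal{L}V$ and the constant $c_3=\lambda p/2$ match the paper's preceding theorem), and then invoke the equivalence \Cref{thm61} to transfer the stability to EMSDE \eqref{SDEs-Euler-1}, EMSDEPCA \eqref{SDEPCAs-Euler-1} and SDEPCA \eqref{SDEPCAs} for sufficiently small $h$ and $\tau$. Your remarks on the non-smoothness of $|y|^p$ at the origin and on making the smallness thresholds explicit via \eqref{A13}, \eqref{H4tau} and \eqref{eq81} are sensible refinements of the same argument, not a different approach.
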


%


\bibliographystyle{siamplain}
\bibliography{References_o}
\end{document}


\maketitle

\section{A detailed example}

Here we include some equations and theorem-like environments to show
how these are labeled in a supplement and can be referenced from the
main text.
Consider the following equation:
\begin{equation}
  \label{eq:suppa}
  a^2 + b^2 = c^2.
\end{equation}
You can also reference equations such as \cref{eq:matrices,eq:bb} 
from the main article in this supplement.

\lipsum[100-101]

\begin{theorem}
  An example theorem.
\end{theorem}

\lipsum[102]
 
\begin{lemma}
  An example lemma.
\end{lemma}

\lipsum[103-105]

Here is an example citation: \cite{KoMa14}.

\section[Proof of Thm]{Proof of \cref{thm:bigthm}}
\label{sec:proof}

\lipsum[106-112]

\section{Additional experimental results}
\Cref{tab:foo} shows additional
supporting evidence. 

\begin{table}[htbp]
{\footnotesize
  \caption{Example table}  \label{tab:foo}
\begin{center}
  \begin{tabular}{|c|c|c|} \hline
   Species & \bf Mean & \bf Std.~Dev. \\ \hline
    1 & 3.4 & 1.2 \\
    2 & 5.4 & 0.6 \\ \hline
  \end{tabular}
\end{center}
}
\end{table}

\bibliographystyle{siamplain}
\bibliography{references}